\documentclass[11pt,a4paper]{article}
\usepackage{geometry, empheq}
\usepackage[english]{babel}
\geometry{left=2cm, right=2cm, top=3.5cm, bottom=3.5cm}
\usepackage{amsmath}
\usepackage{amssymb}
\usepackage{amsthm}
\usepackage{xcolor,graphicx}
\usepackage{color}
\usepackage{varioref}
\usepackage{nicefrac}
\usepackage{caption}
\usepackage{subcaption}
\usepackage{float}
\usepackage[normalem]{ulem}
\usepackage{multirow}
\usepackage{enumerate}
%\usepackage{hyperref}
%\usepackage{tikz}
%\usepackage{pgfplots} 
%\usepackage{pgfgantt}
%\usepackage{pdflscape}
%\usepgfplotslibrary{external} 
%\tikzexternalize
%\pgfplotsset{compat=newest} 
%\pgfplotsset{plot coordinates/math parser=false}
\usepackage{algorithm}
\usepackage{algorithmic}
%\usepackage{url}

%\selectcolormodel{gray}

%\usepackage{booktabs}

%\newcommand{\smallskip}{\medskip}

\newcommand{\arrow}{\rightarrow}
\newcommand{\ds}{\displaystyle}

\newcommand{\kw}{\rule{2mm}{2mm}}

\newtheorem{example}{\textbf{Example}}
\newcommand{\norm}[1]{{\|  #1\|}}
\renewcommand{\l}{\left}
\renewcommand{\r}{\right}
\newcommand{\om}{\Omega}
\newcommand{\ga}{\Gamma}
\newcommand{\fsparse}{\Upsilon}
\newcommand{\reals}{\mathbb{R}}
\newcommand{\sign}{\mathrm{sign}}

\setcounter{table}{0}

\renewenvironment{proof}{{Proof.}}{\hfill\kw}

%\font\nowa = eufm5 scaled \magstep 3

\newcounter{theassumption}
\setcounter{theassumption}{0}

\newtheorem{proposition}{{Proposition}}

\newtheorem{lemma}{{Lemma}}
\newtheorem{remark}{{Remark}}
\newtheorem{corollary}{{Corollary}}
\newtheorem{theorem}{{Theorem}}

\title{A difference--of--convex functions approach for sparse PDE optimal control problems with nonconvex costs\footnote{$^*$This research has been supported by Research Project PIJ-15-26 funded by Escuela Polit\'ecnica Nacional, Quito--Ecuador. Moreover, we acknowledge partial support of SENESCYT--MATHAmSud project SOCDE ``Sparse Optimal Control of Differential Equations''.}
}
%\title{Numerical solution of nonconvex PDE--constrained optimization problems via DC programing}

\author{Pedro Merino$^\ddag$ \\
{pedro.merino@epn.edu.ec}\\
{\small $^\ddag$Research Center of Mathematical Modeling (MODEMAT)} \\{\small and Department of Mathematics, Escuela Polit\'ecnica Nacional}\\{\small Quito, Ecuador}
}
\date{last update 04/2019}
%\address{}
%\address
%\keywords{}
%\subjclass[2010]{90C26, 90C46, 49J20, 49K20}

\begin{document}
%\smallskip
\maketitle
\begin{abstract}
We propose a local regularization of elliptic optimal control problems which involves the nonconvex $L^q$ quasi--norm penalization in the cost function. The proposed \emph{Huber type} regularization allows us to formulate the PDE constrained optimization instace as a DC programming problem (difference of convex functions) that is useful to obtain necessary optimality conditions and tackle its numerical solution by applying the well known DC algorithm used in nonconvex optimization problems. By this procedure we approximate the original problem in terms of a consistent family of parameterized nonsmooth problems for which there are efficient numerical methods available. Finally, we present numerical experiments to illustrate our theory with different configurations associated to the parameters of the problem.
\end{abstract}

% 35Q53 Burgers Equation
% 49K20 Optimality conditions
% 49J20 Optimal control of pde
% 80M10 FEM
% 49N05 Linear optimal control problemschï¿½vere
% 65N12 Stability and convergence of numerical methods
% 41A25 Rate of convergence, degree of approximation
% 90C34 Semi-infinite programming
% 49M15 Newton type methods 
% 65K05  	Mathematical programming methods
% 90C26  	Nonconvex programming, global optimization
% 90C46  	Optimality conditions, duality [See also 49N15]

\section{Introduction}

Several sparse optimal control problems governed by PDEs have been considered in recent years. One of the pioneer works on this subject \cite{stadler09} introduced optimal control problems with $L^1$--norm penalization in order to promote sparse optimal solutions. These solutions are characterized by having small supports, which are interpreted as a ``localized'' action of the optimal control. This particular feature of sparse optimal controls is relevant in applications since it is rather difficult in practice to implement optimal controls distributed on the whole domain, which is the usual case of optimal control problems involving the Tikhonov regularization in the  $L^2$--norm in its cost functional. 

Another interesting class of optimal control problems involving sparsity were considered in \cite{caskun2016} and \cite{cclakun2013} where the set of feasible controls is chosen in the space of regular Borel measures. Therefore, optimal controls can be supported in a set of zero Lebesgue measure. A complete review on this subject, including parabolic problems, can be found in \cite{casas2017}. 

A less explored approach that offers sparse solutions induced by a penalization term was considered in \cite{itoku2014} which refers to penalizations consisting in nonconvex $L^q$ quasinorms with $q \in [0,1)$. This kind of penalizations has many important applications, for instance: in inverse problems on the reconstruction of the sparsest solution in undetermined systems \cite{ramlau2012},  image restoration \cite{hint2013}, compressive sensing \cite{foucart2013} and optimal control problems \cite{itoku2014}.

In particular, the limit case corresponding to $L^0$ penalization is a difficult problem which corresponds to the selection of the most representative variables of the optimization process, extending the notion of cardinality of the control variable in finite dimensions, represented by the $\ell^0$ norm, which is well known to be an NP--hard problem. $L^q$ quasinorms with $q \in (0,1)$ on the other hand, are a natural approximation to $L^0$ penalizations. However, they are neither convex nor differentiable. 

In \cite{itoku2014} a similar problem is considered involving a penalization term for the control variable involving the $H_0^1$--norm. This allows to get an explicit optimality system that can be solved directly by semi--smooth Newton methods. In our case, we consider a Tikhonov term in the $L^2$--norm. Although existence of optimal controls can be argued in this case under certain conditions, uniqueness of the solution is not expected as shown in a simple example below.  

Due to the lack of convexity and differentiability these costs are difficult to tackle numerically. In this paper, we address the numerical solution of this type of problems by regularizing the fractional $L^q$ quasinorms; for this purpose, we introduce a Huber--like smoothing function which regularizes the nonconvex $L^q$ term. In this way, we obtain a family of regularized nonsmooth problems whose objective functional can be expressed as a DC-function ( ``DC" stands for difference of convex functions), which reveals the underlying convexity of this class of problems. Although the regularized problem remains nonconvex and nondifferentiable, we can take advantage of the DC structure of the functional by applying known tools from convex analysis  and DC programming theories in order to derive optimality conditions and prove that the regularization is consistent. Moreover, we propose a numerical method based on the \emph{DC-Algorithm (DCA)}. It follows that the proposed DC splitting leads to a primal--dual updating that  only requires the numerical resolution of a convex $L^1$--norm penalized optimal control problem in each iteration, for which there are efficient numerical methods at hand. 

It is worth to mention that although our methodology is proposed for elliptic problems, it can be extended for different boundary conditions, parabolic problems or  optimal control problems involving other type of equations.

This paper is organized as follows. In Section 1 we introduce the non convex optimal control problems endowed with $L^q$--functionals with $q=\frac1p$, and $p>1$. In Section 2 we propose a Huber--like smoothing function in order to regularize the nonconvex optimal control problems. We show that the regularized problems can be expressed as a difference of convex functions and derive optimality conditions in Section 3. The box--constrained case is discussed at the end of this section. In addition, we provide a proof that the solution of the regularized version of the optimal control problem approximates the solution of the original one when the regularizing parameter tends to infinity. Section 4 is devoted to the numerical solution by proposing a DC--Algorithm based method. We finish this article by showing numerical examples and numerical evidence of the efficiency of the proposed method.\\

\subsection{Setting of the problem}
For $p>1$, let us define the mapping $\fsparse_p : L^2(\om) \arrow \reals $ by
\begin{equation}\label{eq:fractional1}
	u \mapsto \fsparse_p (u) := \int_\om |u|^{\frac{1}{p}}.
\end{equation}

Let $\om$ a bounded Lipschitz domain in $\reals ^n$  ($n=2$ or $n=3$) with boundary $\Gamma$. We are interested in the following optimal control problem  involving a penalization term of the form \eqref{eq:fractional1}. For $\alpha>0$ and $\beta>0$ we consider the optimal control problem:
\begin{equation}
\tag{$P$} \label{e:OCP}
\begin{cases}
\displaystyle\min_{(y,u)\in  H_0^1(\om) \times L^2(\om)} ~\frac{1}{2}\| y-y_d \|^2_{L^2(\om)}+\frac{\alpha}{2}\|u\|^2_{L^2(\om)}+\beta \fsparse_p(u)\\
\hbox{ subject to: }\\
\hspace{40pt}\begin{array}{rll}
A y=&u + f, &\hbox{in  } \om, \\
y=&0, &\hbox{on  }  \Gamma,
%\dfrac{\partial y}{\partial \vec{n}}=0&\hbox{sobre  } \Gamma. %, $\gamma= 1e4$,
\end{array}
%\\
%\hspace{45pt} u \in U_{ad},
\end{cases}
\end{equation}
where $f$ is a given function in $L^2(\om)$ and $A$ is a uniformly elliptic second order differential operator of the form:
\begin{equation}\label{eq:A}
(Ay) (x) = - \sum_{i,j=1}^n \frac{\partial}{\partial x_i}	\l( a_{ij}(x)\frac{\partial y(x)} {\partial x_j }\r) +c_0 y(x).
\end{equation}
Here, the coefficients $a_{ij} \in C^{0,1}(\bar \om)$, and $c_0 \in L^{\infty}(\om)$. Moreover, the matrix $(a_{ij})$ is symmetric and fulfill the uniform ellipticity condition: \[ \exists\, \sigma>0 : \quad \ds  \sum_{i,j=1}^n a_{ij}(x) \xi_i\xi_j \geq \sigma |\xi|^2, \quad \forall \xi \in \reals^n, \text{for almost all } x \in \om. \]
We will denote the adjoint  of $A$ by $A^*$. Moreover, associated to the elliptic operator $A$, we define the bilinear form 
$$ a(y,v):= \int_{\om} \sum_{i,j=1}^n a_{ij}(x)\frac{\partial y(x)} {\partial x_j }\frac{\partial v(x)} {\partial x_j }  +c_0 y(x)v(x) \, dx, $$  which we use to define the associated variational problem problem:
\begin{equation}\label{eq:elliptic}
	a(y,v) = (w,v)_{L^2(\om)}, \quad \forall v\in H_0^{1}(\om).
\end{equation}
It is well known that \eqref{eq:elliptic} has a unique solution belonging to the space $H^1_0(\om)$. Let  $S:L^2(\om ) \arrow H_0^1(\om)$ be the linear and continuous operator which assigns to every $w \in L^2(\om)$ the corresponding solution $y=y(w) \in H_0^1(\om)$ satisfying \eqref{eq:elliptic}. Thus, the state equation: $Ay=u$ in $\om$,  with homogeneous Dirichlet boundary conditions, considered in \eqref{e:OCP}, is understood in the weak sense c.f. \eqref{eq:elliptic}. In this way, the state $y$ associated to the control $u$ has the representation  $y=S(u + f)$, which in turn allows us to formulate the usual reduced optimization problem: 
\begin{equation}
\tag{$P'$} \label{e:OCP'}
\displaystyle\min_{u \in L^2(\om)} J(u):=~\frac{1}{2}\| Su + Sf-y_d \|^2_{L^2(\om)}+\frac{\alpha}{2}\|u\|^2_{L^2(\om)}+\beta \fsparse_p(u).
\end{equation}

%todo existence of original problem
\begin{theorem} \label{t:existP}
There exists a solution $\bar u \in L^2(\om)$ for the reduced problem  \eqref{e:OCP'}.
\end{theorem}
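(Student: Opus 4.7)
My plan is to apply the direct method of the calculus of variations to the reduced functional $J$. Since $J(u)\ge 0$ for every $u\in L^2(\om)$, the infimum $m:=\inf_{u\in L^2(\om)} J(u)$ is a finite nonnegative number. Choose a minimizing sequence $\{u_n\}\subset L^2(\om)$ with $J(u_n)\to m$. From $\tfrac{\alpha}{2}\|u_n\|_{L^2(\om)}^2\le J(u_n)$ the sequence $\{u_n\}$ is uniformly bounded in $L^2(\om)$, and by reflexivity I extract a subsequence (not relabelled) with $u_n \warrow \bar u$ in $L^2(\om)$ for some $\bar u\in L^2(\om)$.

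Next I pass to the liminf term by term. For the tracking term, the operator $S:L^2(\om)\to H_0^1(\om)$ is linear and continuous, and the embedding $H_0^1(\om)\hookrightarrow L^2(\om)$ is compact by Rellich--Kondrachov, so $Su_n\to S\bar u$ strongly in $L^2(\om)$ and therefore $\tfrac12\|Su_n+Sf-y_d\|_{L^2}^2\to\tfrac12\|S\bar u+Sf-y_d\|_{L^2}^2$. The Tikhonov term is weakly lower semicontinuous on $L^2(\om)$ as a convex, continuous functional, yielding $\tfrac{\alpha}{2}\|\bar u\|_{L^2}^2\le \liminf_n\tfrac{\alpha}{2}\|u_n\|_{L^2}^2$.

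The delicate step is to establish $\fsparse_p(\bar u)\le \liminf_n \fsparse_p(u_n)$, which is not covered by the standard convex-analytic lsc theorems since $t\mapsto|t|^{1/p}$ is not convex. The intended route is to pass to a further subsequence along which $u_n\to\bar u$ pointwise almost everywhere in $\om$ and then apply Fatou's lemma to the nonnegative integrand $|u_n|^{1/p}$ to obtain $\int_\om|\bar u|^{1/p}\le \liminf_n \int_\om|u_n|^{1/p}$. Combining the three estimates yields $J(\bar u)\le \liminf_n J(u_n)=m$, whence $\bar u$ is a minimizer.

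The main obstacle is precisely this last step: weak $L^2$ convergence does not, in general, imply a.e.\ convergence, and indeed $\fsparse_p$ fails to be weakly lower semicontinuous on $L^2(\om)$ --- rapidly oscillating sequences can strictly decrease $\fsparse_p$ across a weak limit, so a naive application of standard semicontinuity theorems does not suffice. Closing the argument requires an additional compactness step on the minimizing sequence (for instance, exploiting the simultaneous boundedness of $\fsparse_p(u_n)$ and $\|u_n\|_{L^2}$ to preclude concentration/oscillation and extract an a.e.\ convergent subsequence), or, alternatively, a relaxation-type detour in which $\fsparse_p$ is replaced by its weakly lsc envelope and one argues separately that the resulting infimum coincides with $m$ and is attained by an admissible $\bar u$.
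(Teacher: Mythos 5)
Your proposal is not a complete proof, and you have correctly located the point where it fails: the direct method stalls at the nonconvex term. A minimizing sequence for $J$ that is merely bounded in $L^2(\om)$ can oscillate; for such a sequence $u_n \warrow \bar u$ one can have $\fsparse_p(\bar u) > \liminf_n \fsparse_p(u_n)$ (take $u_n$ alternating between $0$ and $2a$ on sets of equal measure, so that $\liminf_n\fsparse_p(u_n)=2^{\frac1p-1}a^{\frac1p}|\om| < a^{\frac1p}|\om|=\fsparse_p(a)$), so weak lower semicontinuity genuinely fails and Fatou cannot be invoked without a.e.\ convergence, which weak $L^2$ convergence does not provide. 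Neither of your two suggested repairs closes the gap as stated: in the oscillating example both $\|u_n\|_{L^2}$ and $\fsparse_p(u_n)$ stay bounded, so these bounds alone yield no additional compactness and no a.e.\ convergent subsequence; and replacing $\fsparse_p$ by its weakly lsc envelope produces a relaxed problem whose minimizer need not be a minimizer of the original $J$, so the relaxation detour proves a different statement unless you separately show the infima coincide and are attained, which is the whole difficulty.

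The paper takes a genuinely different route and, notably, does not attempt the direct method on \eqref{e:OCP'} at all. It defers the proof and obtains existence as a by-product of the consistency of the Huber-type regularization: solutions $\bar u_\gamma$ of the regularized problems \eqref{e:OCP2} are shown to exist in Theorem \ref{t:exist} --- where the admissible set is deliberately restricted so that minimizing sequences are bounded in $H_0^1(\om)$, giving \emph{strong} $L^2$ convergence by compact embedding, after which the penalization term passes to the limit by the $L^1$-continuity of Lemma \ref{l:conv1} rather than by any semicontinuity argument. Theorem \ref{t:consistency} then extracts a subsequence $\bar u_{\gamma_n}$ converging strongly in $L^2(\om)$ and shows its limit minimizes $J$, using the uniform convergence $J_\gamma \arrow J$ of Lemma \ref{l:uniconv_nonconvex} and the one-sided bound $h_{p,\gamma}(v)\leq|v|$ from \eqref{eq:hub1}. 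The common thread is that strong convergence is manufactured from compactness built into the construction (the $H^1$-type admissible set and the regularization), not harvested from a minimizing sequence of \eqref{e:OCP'} itself; that is precisely the ingredient your argument is missing, and without it your plan does not yield a proof.
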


We postpone the proof of this result to Section \ref{s:DCA}, where we proove that a sequence of solutions of approximating problems of the form $\min_{u} J_{\gamma}(u)$, converges to the solution of \eqref{e:OCP'}.

\begin{remark}
	The question of uniqueness is more delicate. The following example of the minimization of a real function has two solutions.
	Let $f : \reals \arrow \reals$ given by $f(x) = \frac12 (x - a)^2 + \beta |x|^{\frac12}$. By choosing $a=1+\frac12$ and $\beta=1$, it is easy to verify that $f$ has two minimum points at $x_{1} = 0$ and $x_2=1$ with the minimum value $f(0)=f(1)=\frac98$. Therefore, we cannot expect uniqueness of the solution for problem \eqref{e:OCP'} in view of the nonconvexity of cost function. 
	  
\end{remark}
 
Following the work of Stadler \cite{stadler09}, where $L^1$--norm penalization optimal control problems are considered, we expect that some analogous properties also hold for problem \eqref{e:OCP}. For example, it is expected that a local solution for \eqref{e:OCP} vanishes if the parameter $\beta$ is large enough. We address this question in the following lemma.
\begin{lemma}\label{l:null_sol} Let $S^*$ be the adjoint operator of $S$, and let $M>0$.
 If $\beta \geq \beta_0$ with $\beta_0= M^{\frac{p-1}{p}}\, \norm{S^{*}(Sf - y_d)}_{L^\infty(\om)}$ , then problem \eqref{e:OCP} has a local minimum at $\bar u = 0$ in $B_\infty(0, M)$ (the unit open ball in $L^\infty(\om)$)  with associated state $ y_0 := S f $.	
\end{lemma}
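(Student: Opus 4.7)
The plan is to show directly that $J(u) - J(0) \geq 0$ for every $u$ in the open ball $B_\infty(0,M)$, which by definition of local minimum is exactly what is needed.

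First I would expand the quadratic tracking term around $u = 0$:
\begin{equation*}
J(u) - J(0) = (Su, Sf - y_d)_{L^2(\om)} + \tfrac{1}{2}\norm{Su}^2_{L^2(\om)} + \tfrac{\alpha}{2}\norm{u}^2_{L^2(\om)} + \beta \fsparse_p(u).
\end{equation*}
The second, third, and fourth terms on the right are manifestly nonnegative, so the whole question reduces to controlling the (possibly negative) cross term by the nonconvex penalization. Using the adjoint operator $S^*$, I rewrite the cross term as $(u, S^*(Sf - y_d))_{L^2(\om)}$ and bound it by
\begin{equation*}
\bigl|(u, S^*(Sf - y_d))_{L^2(\om)}\bigr| \leq \norm{S^*(Sf - y_d)}_{L^\infty(\om)} \int_\om |u|\,dx.
\end{equation*}

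The key pointwise step is then to relate $|u|$ to $|u|^{1/p}$ for $u \in B_\infty(0,M)$. Since $|u(x)| \leq M$ almost everywhere, we have $|u(x)|^{(p-1)/p} \leq M^{(p-1)/p}$, so multiplying both sides by $|u(x)|^{1/p}$ gives
\begin{equation*}
|u(x)| \leq M^{(p-1)/p} |u(x)|^{1/p} \quad \text{a.e. in } \om.
\end{equation*}
Integrating yields $\int_\om |u|\,dx \leq M^{(p-1)/p} \fsparse_p(u)$, and therefore
\begin{equation*}
\bigl|(u, S^*(Sf - y_d))_{L^2(\om)}\bigr| \leq M^{(p-1)/p} \norm{S^*(Sf - y_d)}_{L^\infty(\om)} \fsparse_p(u) = \beta_0\, \fsparse_p(u).
\end{equation*}

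Plugging this back, the assumption $\beta \geq \beta_0$ gives $\beta \fsparse_p(u) \geq \beta_0 \fsparse_p(u) \geq -(u, S^*(Sf - y_d))_{L^2(\om)}$, so that $J(u) - J(0) \geq 0$ for every $u \in B_\infty(0,M)$, which establishes the claim; the state $\bar y = S f$ follows immediately from $\bar u = 0$ and the state equation. I do not expect any real obstacle here: the proof is a simple energy-style estimate, and the only nonobvious ingredient is the pointwise comparison $|u| \leq M^{(p-1)/p}|u|^{1/p}$, which is exactly what motivates the definition of the threshold $\beta_0$ in the statement.
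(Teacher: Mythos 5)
Your proof is correct and follows essentially the same route as the paper's: expand the quadratic, bound the cross term by $\norm{S^*(Sf-y_d)}_{L^\infty(\om)}\norm{u}_{L^1(\om)}$, and absorb it into $\beta\fsparse_p(u)$ via the pointwise inequality $|u|\leq M^{(p-1)/p}|u|^{1/p}$ on $B_\infty(0,M)$. The only (immaterial) difference is that the paper keeps the factored form $(M^{(p-1)/p}-|u|^{(p-1)/p})|u|^{1/p}$ to claim strict positivity, whereas you settle for $J(u)-J(0)\geq 0$, which already suffices for the stated local minimality.
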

\begin{proof} Taking into account the reduced form \eqref{e:OCP'}, we argue analogously to \cite[Lemma 3.1]{stadler09}. Let us take $u \in B_\infty(0, M)$, then $|u(x)| < M$ for almost all $x$ in $\om$. Computing the difference of the cost values we have:
\begin{align*}
	J(u) - J(0) = &\frac{1}{2}\| Su + Sf-y_d \|^2_{L^2(\om)}+\frac{\alpha}{2}\|u\|^2_{L^2(\om)}+\beta \fsparse_p(u) \\ 
	& - \frac{1}{2}\| Sf-y_d \|^2_{L^2(\om)} \\
	=& \frac12 \| S u \|^2_{L^2(\om)} + (Su,Sf-y_d)_{L^2(\om)} + \frac{\alpha}{2}\|u\|^2_{L^2(\om)}+\beta \fsparse_p(u) \\
	\geq & \frac12 \| S u \|^2_{L^2(\om)} - \norm{u}_{L^1(\om)} \norm{S^*(Sf-y_d)}_{L^\infty(\om)} + \frac{\alpha}{2}\|u\|^2_{L^2(\om)}+\beta \fsparse_p(u), \\
	   %&  - \norm{u}_{L^1(\om)} \norm{S^*(Sf-y_d)}_{L^\infty(\om)} + \beta \fsparse_p(u), \\
	 \geq & \int_{\om} \beta |u|^{\frac1p} - |u| \norm{S^*(Sf-y_d)}_{L^\infty(\om)}\, dx \\
	 \geq & \int_{\om} \beta_0 |u|^{\frac1p} - |u| \norm{S^*(Sf-y_d)}_{L^\infty(\om)}\, dx.
\end{align*}
By the definition of $\beta_0$ it follows that 
\begin{align*}
	J(u) - J(0)	 \geq & \int_{\om} \l( M^{\frac{p-1}{p}}-|u|^{\frac{p-1}{p}} \r)|u|^{\frac1p} \norm{S^*(Sf-y_d)}_{L^\infty(\om)}\, dx > 0,
\end{align*}
where the nonnegativity is obtained by our assumption $u \in B_\infty(0,M)$.
\end{proof}

%\begin{theorem}\label{t:null_sol} 
%	Let  $\beta_0$ as in Lemma \ref{l:null_sol}, with $M=...$. Then, problem \eqref{e:OCP} attains its minimum at $\bar u = 0$ with associated state $\bar y= Sf$.
%\end{theorem}
%\begin{proof}
%	It is more or less intuitive that if $M$ from Lemma \ref{l:null_sol} is large enough the null solution becomes a global. If $u \in L^2(\om)$ is such that $|u(x)| \geq M>1$ for almost all $x \in \om$ we have
%\begin{align*}
%\frac{1}{2}\| Su + Sf-y_d \|^2_{L^2(\om)} & +\frac{\alpha}{2}\|u\|^2_{L^2(\om)}+ \beta_0\int_{\om} |u| \,dx \\ 
% \geq & J(u) \\
%	= & \frac{1}{2}\| Su + Sf-y_d \|^2_{L^2(\om)}+\frac{\alpha}{2}\|u\|^2_{L^2(\om)}+ \int_{\om}\beta_0 |u|^{\frac{1}{p}} \,dx \\ 
%		\geq & \frac12 \| S u \|^2_{L^2(\om)} + (Su,Sf-y_d)_{L^2(\om)} + \int_{\om}M^{\frac{p-1}{p}} |u|^{\frac{1}{p}} \,dx \\ \end{align*}	
%	
%\end{proof}
%
%

\section{The Regularized Optimal Control Problem}

\subsection{Huber--type regularization}
In order to analyze problem \eqref{e:OCP} we formulate a family of regularized problems, by means of the following Huber--type regularization of the absolute value.  Extending the classical Huber $C^1$ regularization of the absolute value, we propose a Huber regularization  $\fsparse_{p,\gamma}$ which takes into account the fractional powers defining $\fsparse_{p}$. The resulting function to the power $1/p$ is a locally convex regularization for the nonconvex and non differentiable term, see Figure \ref{fig:huber_approx} below. For $\gamma \gg 1$, we define

\begin{equation}\label{eq:huber}
\ds
{h}_{p,\gamma} (v)=
\left\{ \begin{array}{ll}
\frac{\gamma^{p-1}}{p}|v|^p,& \hbox{if } v \in[-\frac{1}{\gamma},\frac{1}{\gamma}],\vspace{2mm}\\
|v| + \frac{1}{\gamma}\frac{1-p}{p},&   \text{ otherwise. }
\end{array} \right.
\end{equation}

 \begin{figure}[ht!]
\centering
\begin{subfigure}{\textwidth}
\includegraphics{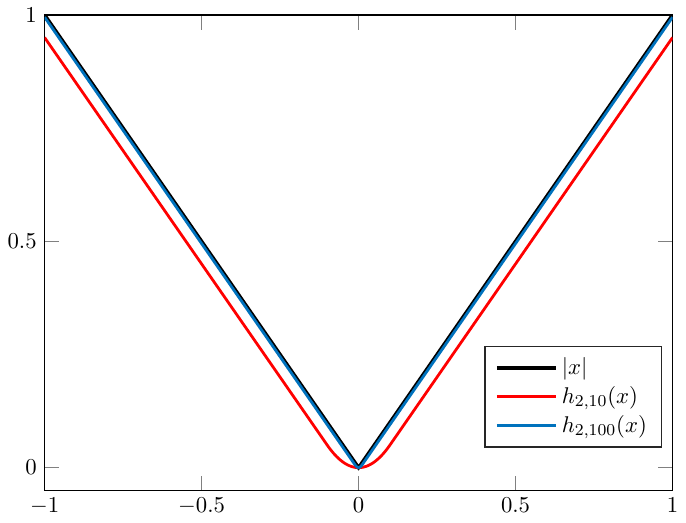}
\hfill
\includegraphics{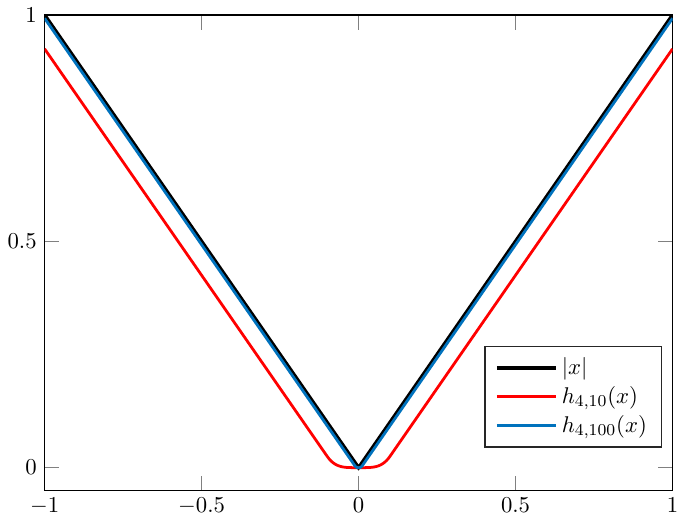}
\end{subfigure}

\begin{subfigure}{\textwidth}
\includegraphics{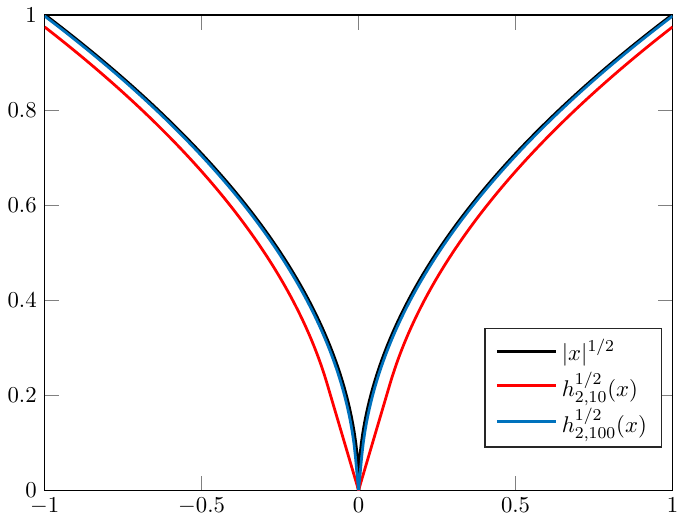}
\hfill
\includegraphics{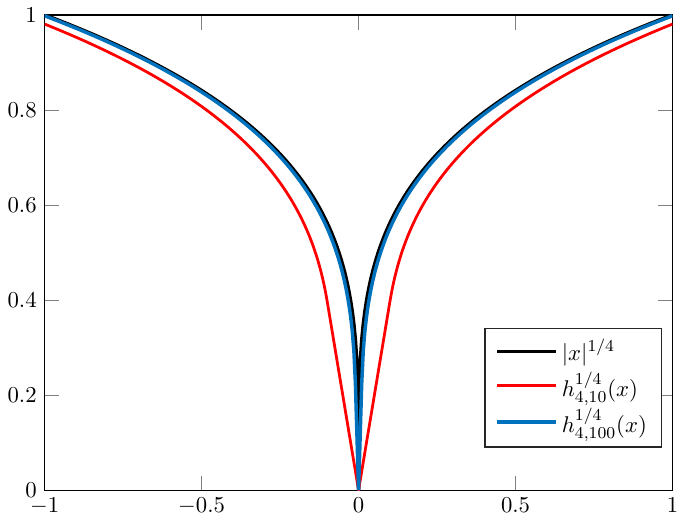}
\end{subfigure}
\caption{Exact (black) and regularized penalizations for the absolute value (first row) and the function $|x|^{1/p}$ in the second row, for parameters $\gamma=10$ (red) and $\gamma=100$ (blue), for $p=2$ (left) and $p=4$ (right).}
\label{fig:huber_approx}
 \end{figure}

\begin{remark}\label{r:huber_reg}
	The function $h_{p,\gamma}$ is a local regularization of the absolute value for different smoothing polynomial powers. In addition, notice that by construction, we have the relation 
	\begin{equation}\label{eq:hub1}
		h_{p,\gamma}(v) \leq |v|, \quad \forall v\in \reals.
	\end{equation}
It is worth to notice that \eqref{eq:huber} is different from the local regularization proposed in \cite{itoku2014}[pg. 1971 eq.(5.1)] which majorizes $\fsparse_p(u)$. Both regularization terms can be used to compute upper and lower bounds for the cost functions of \eqref{e:OCP}, respectively. Although they may appear similar, observe that \eqref{eq:huber} approximates $g$ nonsmoothly in a neighborhood of 0. This fact is crucial to express our objective functional as a difference of convex functions. In fact, the representation as a DC--function is not possible using the regularization proposed by \cite{itoku2014}.
Therefore, by using the Huber--type regularization we are able to appproximate \eqref{e:OCP} by sequence of $L^1$--sparse problems. The resulting DC--algorithm will be introduced in Section \ref{s:DCA}. 
\end{remark}

Now, we have the basic tool in order to formulate a regularized version of $\eqref{e:OCP}$. We introduce the function $\fsparse_{p,\gamma} $ defined by
\begin{equation}\label{eq:fractional}
	u \mapsto \fsparse_{p,\gamma} (u) := \int_\om h_{p,\gamma}(u(x))^{\frac{1}{p}} dx.
\end{equation}
The regularized problem is obtained by replacing $\Upsilon_p$  by $\fsparse_{p,\gamma} $. Therefore, the surrogate problem parameterized by  $\gamma$ reads:
\begin{equation}
\tag{$P_\gamma$} \label{e:OCP2}
\begin{cases}
\displaystyle\min_{(y,u)} ~\frac{1}{2}\| y-y_d \|^2_{L^2(\om)}+\frac{\alpha}{2}\|u\|^2_{L^2(\om)}+\beta \fsparse_{p,\gamma}(u)\\
\hbox{ subject to:}\\
\hspace{40pt}\begin{array}{cl}
Ay=u +f &\hbox{in  } \om, \\
y=0 &\hbox{on  }  \Gamma.
%\dfrac{\partial y}{\partial \vec{n}}=0&\hbox{sobre  } \Gamma. %, $\gamma= 1e4$,
\end{array}
\end{cases}
\end{equation}

We proceed to formulate the reduced optimal control problem from \eqref{e:OCP2} by replacing the control--to--state operator $S$. Let $F$ be the regular part of the functional, which is $F(u) = \frac{1}{2}\| Su-y_d \|^2_{L^2(\om)}+\frac{\alpha}{2}\|u\|^2_{L^2(\om)}$. Thus, we have the reduced problem,

\begin{equation}
\label{e:OPT1}
\displaystyle\min_{u} J_\gamma(u):=~F(u) +\beta \fsparse_{p,\gamma}.
\end{equation}
From \cite[Lemma 5.1]{itoku2014} it is known that if a sequence $(u_n)_{n\in \mathbb{N}}$ is such that $u_n \arrow u$ in $L^1(\om)$ then $\fsparse_{p} ( u_ n) \arrow \fsparse_{p} ( u )$ as $n \arrow \infty$. In the case of $\fsparse_{\gamma,p}$ we have the following continuity property.

\begin{lemma}\label{l:conv1}
Let $(u_n)$	be a sequence such that $u_n \arrow u$ in $L^1(\om)$. Then $$ \fsparse_{p,\gamma} (u_n) \arrow  \fsparse_{p,\gamma} (u), \quad\text{when } n \arrow \infty,$$
for all $p>1$ and all $\gamma>0$.
\end{lemma}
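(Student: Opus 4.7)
The plan is to bound the integrand pointwise by a power of $|u_n - u|$ and then pass to the limit using Hölder's inequality and the assumed $L^1$ convergence. The key analytic observations are (i) the inner function $h_{p,\gamma}$ is globally Lipschitz, and (ii) the outer power $(\cdot)^{1/p}$ satisfies the elementary inequality $|a^{1/p}-b^{1/p}|\le |a-b|^{1/p}$ for nonnegative $a,b$ when $p\ge 1$.

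First I would verify that $h_{p,\gamma}\in C^1(\reals)$ with $|h_{p,\gamma}'(v)|\le 1$ for all $v$, hence $h_{p,\gamma}$ is $1$-Lipschitz. On $(-1/\gamma,1/\gamma)$ one has $h_{p,\gamma}'(v)=\gamma^{p-1}|v|^{p-1}\sign(v)$, which has absolute value at most $\gamma^{p-1}(1/\gamma)^{p-1}=1$; outside, $h_{p,\gamma}'(v)=\sign(v)$; the two expressions agree at $|v|=1/\gamma$, ensuring $C^1$ regularity and the Lipschitz bound. Consequently, for all $v,w\in\reals$,
\begin{equation*}
    |h_{p,\gamma}(v)-h_{p,\gamma}(w)|\le |v-w|.
\end{equation*}

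Next I would invoke (or quickly justify) the subadditivity-type inequality $|a^{1/p}-b^{1/p}|\le |a-b|^{1/p}$ for $a,b\ge 0$ and $p\ge 1$. By symmetry one may assume $a\ge b\ge 0$ and, dividing by $a^{1/p}$, reduce to showing $1-t^{1/p}\le (1-t)^{1/p}$ for $t\in[0,1]$, which follows from $t^{1/p}+(1-t)^{1/p}\ge t+(1-t)=1$ since $s\mapsto s^{1/p}\ge s$ on $[0,1]$. Combining with the Lipschitz bound applied to $v=u_n(x)$, $w=u(x)$, and noting that $h_{p,\gamma}\ge 0$, yields the pointwise estimate
\begin{equation*}
    \bigl| h_{p,\gamma}(u_n(x))^{1/p} - h_{p,\gamma}(u(x))^{1/p}\bigr| \le |u_n(x)-u(x)|^{1/p}\quad \text{a.e.\ in }\om.
\end{equation*}

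Finally I would integrate and apply Hölder's inequality with exponents $p$ and $p/(p-1)$ (since $\om$ has finite Lebesgue measure):
\begin{equation*}
    \bigl|\fsparse_{p,\gamma}(u_n)-\fsparse_{p,\gamma}(u)\bigr| \le \int_\om |u_n-u|^{\frac1p}\,dx \le |\om|^{\frac{p-1}{p}}\,\|u_n-u\|_{L^1(\om)}^{\frac1p},
\end{equation*}
and the right-hand side tends to $0$ by assumption. There is no real obstacle here; the only subtlety is choosing the right two inequalities (Lipschitz continuity of the inner function and concavity of $s\mapsto s^{1/p}$) so as to avoid invoking dominated convergence and an extraction of an a.e.\ convergent subsequence, which would work but is less clean.
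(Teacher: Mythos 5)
Your proof is correct, and it takes a genuinely different and cleaner route than the paper's. The paper partitions $\om$ into three regions according to whether $|u|$ and $|u_n|$ lie inside or outside the Huber threshold $[-\frac1\gamma,\frac1\gamma]$, and treats each region with a separate estimate: an exact linear computation where both are small, the power-subadditivity inequality plus H\"older where both are large, and a mean-value-theorem argument on the mixed region $\om_{n,3}$, which is the most delicate part of that proof. You collapse the entire case analysis into two global facts: $h_{p,\gamma}$ is $1$-Lipschitz on all of $\reals$ (correctly verified by matching the one-sided derivatives at $|v|=\frac1\gamma$, where both equal $1$), and $|a^{1/p}-b^{1/p}|\le |a-b|^{1/p}$ for $a,b\ge 0$ (correctly reduced to $t^{1/p}+(1-t)^{1/p}\ge 1$ on $[0,1]$, with the needed nonnegativity $h_{p,\gamma}\ge 0$ holding since $|v|+\frac1\gamma\frac{1-p}{p}\ge \frac{1}{\gamma p}>0$ on the outer branch). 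This yields the uniform pointwise bound $|u_n-u|^{1/p}$ on all of $\om$, and a single application of H\"older with exponents $p$ and $\frac{p}{p-1}$ gives the explicit rate $|\om|^{\frac{p-1}{p}}\norm{u_n-u}_{L^1(\om)}^{1/p}$. The only thing the paper's decomposition buys is a linear (rather than $1/p$-power) dependence on $\norm{u_n-u}_{L^1(\om)}$ on the region where both functions are small, which is irrelevant to the conclusion; in exchange your argument avoids the mixed region entirely, where the paper's chain of inequalities is the least transparent. No gaps.
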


\begin{proof}
Analogously to \cite[Lemma 5]{itoku2014} we define the following sets:
\begin{equation}\nonumber
\begin{array}{ll}
	\om_{n,1} = \{x: |u(x)|\leq\frac1\gamma  \text{ and } |u_n(x)|\leq\frac1\gamma  \}, \\
	\om_{n,2} = \{x: |u(x)|> \frac1\gamma \text{ and } |u_n(x)|>\frac{1}{\gamma} \}, \\
	\om_{n,3} = \{x: |u(x)| \leq  \frac1\gamma \text{ and } |u_n(x)| >\frac{1}{\gamma} \} \cup \{x: |u(x)| > \frac1\gamma \text{ and } |u_n(x)| \leq \frac{1}{\gamma} \},
\end{array}
\end{equation}
which we use to estimate $\l|\int_{\om} h_{p,\gamma}( u(x) )^{\frac1p} - h_{p,\gamma} (u_n(x))^{\frac1p}\,dx \r|$ according to \eqref{eq:huber}. %By our assumption $u_n \arrow u$ in $L^1(\om)$ whenever $n\arrow \infty$. 
Therefore, in $\om_{n,1}$ we have that
\begin{align}\label{eq:om_est0}
\l| \int_{\om_{n,1}} h_{p,\gamma}( u(x) )^{\frac1p} - h_{p,\gamma} (u_n(x))^{\frac1p}\,dx \r|	&\leq \l(\frac{\gamma^{p-1}}{p}  \r)^{\frac1p} \int_{\om_{n,1}}\l| \,|u(x)|- |u_n(x)|\, \r|\,dx, \nonumber \\
& \leq \l(\frac{\gamma^{p-1}}{p}  \r)^{\frac1p} \int_{\om} |u(x) -u_n(x)|\,dx \arrow 0.
\end{align}

Now, in $\om_{n,2}$ we can estimate
\begin{align}
\l| \int_{\om_{n,2}} h_{p,\gamma}( u(x) )^{\frac1p} - h_{p,\gamma} (u_n(x))^{\frac1p}\,dx \r|	&\leq  \int_{\om_{n,2}}\l| \l(|u(x)| + \frac1\gamma \frac{1-p}{p}\r)^{\frac1p}- \l(|u_n(x)| + \frac1\gamma \frac{1-p}{p}\r)^{\frac1p}\, \r|\,dx \nonumber \\
&\leq \int_{\om_{n,2}} \l| \, |u(x)| - |u_n(x)|\,\r|^{\frac1p}\, dx, \nonumber \\
&\leq \int_{\om_{n,2}} \l| \, u(x) - u_n(x)\,\r|^{\frac1p}\, dx. \nonumber
\end{align}
By applying H\"older inequality in the last integral, and by our convergence assumption we have
\begin{align}\label{eq:om_est1}
\l| \int_{\om_{n,2}} h_{p,\gamma}( u(x) )^{\frac1p} - h_{p,\gamma} (u_n(x))^{\frac1p}\,dx \r|	&\leq  |\om|^{^{\frac{p}{p-1}}}\l(\int_{\om} |u(x) -u_n(x)|\,dx \r)^{\frac1p} \nonumber \\
& \quad \arrow 0. \,
\end{align}
Finally, we estimate in $\om_{n,3}$. Without loss of generality we assume that $\{x: |u(x)| \leq \frac1\gamma \text{ and } |u_n(x)| >\frac{1}{\gamma} \}$. The neglected part can be argued in the same way by interchanging the role of $|u(x)|$ and $|u_n(x)|$.
Taking into account that the relation: $|u(x)| \leq 1/\gamma < |u_n(x)|$ is fulfilled in $\om_{n,3}$, it follows that

$$\l(\frac{\gamma^{p-1}}{p}  \r)|u(x)|^p < |u_n(x)| + \frac1\gamma \frac{1-p}{p}, $$
which implies
\begin{align}\label{eq:om_est2}
\l| \int_{\om_{n,3}} h_{p,\gamma}( u(x) )^{\frac1p} - h_{p,\gamma} (u_n(x))^{\frac1p}\,dx \r| & \leq  \int_{\om_{n,3}} \l| h_{p,\gamma}( u(x) )- h_{p,\gamma} (u_n(x)) \r|^{\frac1p}  \, dx \nonumber\\
&= \int_{\om_{n,3}}\l| \l(\frac{\gamma^{p-1}}{p}  \r) |u(x)|^p-  |u_n(x)| - \frac1\gamma \frac{1-p}{p} \, \r|^{\frac1p} \,dx \nonumber \\
& =  \int_{\om_{n,3}} \l( |u_n(x)| + \frac1\gamma \frac{1-p}{p} - \l(\frac{\gamma^{p-1}}{p}  \r)  |u(x)|^p \, \r)^\frac1p\,dx.
\end{align}
Furthermore, in $\om_{n,3}$ we have that $ \frac{1}{\gamma p} <|u_n(x)| + \frac1\gamma \frac{1-p}{p} < |u_n(x)|$, from which we obtain that
\begin{align}\label{eq:om_est3}
	|u_n(x)| + \frac1\gamma \frac{1-p}{p} <  |u_n(x)|^p \l(\frac{\gamma^{p-1}}{p} \r).
 % |u_n(x)|^p \gamma^{p-1}p^{p-1} \leq |u_n(x)|^p \gamma^{p-1}
\end{align}
By replacing \eqref{eq:om_est3} in \eqref{eq:om_est2} we get the following relation

\begin{align}\label{eq:om_est4}
\l| \int_{\om_{n,3}} h_{p,\gamma}( u(x) )^{\frac1p} - h_{p,\gamma} (u_n(x))^{\frac1p}\,dx \r|  
& \leq \l(\frac{\gamma^{p-1}}{p} \r)^\frac{1}{p} \int_{\om_{n,3}} \l( |u_n(x)|^p  -   |u(x)|^p \, \r)^\frac1p\,dx ,\nonumber\\
& = \l(\frac{\gamma^{p-1}}{p} \r)^\frac{1}{p} \int_{\om_{n,3}} \l|\, |u_n(x)|^p  -   |u(x)|^p \, \r|^\frac1p\,dx ,\nonumber \\
& \leq \l(\frac{\gamma^{p-1}}{p} \r)^\frac{1}{p} \int_{\om} \l|\, |u_n(x)|^p  -   |u(x)|^p \, \r|^\frac1p\,dx.
%&  \leq \l(\frac{\gamma^{p-1}}{p} \r)^\frac{1}{p} \int_{\om} \l|\, |u_n(x)|  -   |u(x)| \, \r|\,dx. 
\end{align}
By applying the mean value theorem, there is a $\xi(x)$ such that  $|u(x)|< \xi(x) < |u_n(x)| $ for almost all $x$ in $\om_{n,3}$ that satisfies $|u_n(x)|^p  -   |u(x)|^{p} = p |\xi (x)|^{p-1} (|u_n(x)|- |u(x)|)$. Hence, using this relation and applying H\"older inequality we have
\begin{align}
\int_{\om_{n,3}} \l|\, |u_n(x)|^p  -   |u(x)|^p \, \r|^\frac1p\,dx \nonumber
& \leq \int_{\om_{n,3}} \, p^{\frac1p}|\xi(x)|^{\frac{p-1}{p}} \l| |u_n(x)|  -   |u(x)| \r|^\frac1p \,dx.\nonumber \\
&  \leq p^{\frac1p}\int_{\om_{n,3}} \, |\xi(x)| \,dx  \int_{\om_{n,3}}\l| \, |u_n(x)|  -   |u(x)|\,\r|\,dx.\nonumber 
\end{align}

Thereby, the right--hand side of \eqref{eq:om_est4} tends to 0 as $n\arrow 0$.  Finally, collecting estimates \eqref{eq:om_est0}, \eqref{eq:om_est1} and \eqref{eq:om_est4} the result of the lemma is proved. 
\end{proof}

%todo remark uniform convergence
\begin{lemma}\label{l:uniconv_nonconvex}
$J_\gamma(u)$ converges to $J(u)$ uniformly as $\gamma \arrow \infty$, for any $u\in L^2(\om)$.
\end{lemma}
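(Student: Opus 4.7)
The plan is to show that the difference $J_\gamma(u) - J(u)$ admits a pointwise estimate that is independent of $u$ and tends to zero as $\gamma \to \infty$. Because $F$ is the same in both functionals, the difference reduces to
\begin{equation*}
J_\gamma(u) - J(u) = \beta\bigl(\fsparse_{p,\gamma}(u) - \fsparse_p(u)\bigr) = \beta \int_\om \bigl[h_{p,\gamma}(u(x))^{1/p} - |u(x)|^{1/p}\bigr]\, dx,
\end{equation*}
so the whole problem reduces to a uniform pointwise estimate of $\bigl|h_{p,\gamma}(v)^{1/p} - |v|^{1/p}\bigr|$ in the scalar variable $v$.

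First, I would split $\reals$ into the regions $\{|v| \leq 1/\gamma\}$ and $\{|v| > 1/\gamma\}$ matching the two branches of \eqref{eq:huber}. In the inner region, $h_{p,\gamma}(v)^{1/p} = (\gamma^{p-1}/p)^{1/p}|v|$, which is bounded by $(p\gamma)^{-1/p}$, while $|v|^{1/p} \leq \gamma^{-1/p}$; the triangle inequality then gives
\begin{equation*}
\bigl|h_{p,\gamma}(v)^{1/p} - |v|^{1/p}\bigr| \leq \gamma^{-1/p}\bigl(1 + p^{-1/p}\bigr).
\end{equation*}

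In the outer region, $h_{p,\gamma}(v) = |v| - \tfrac{p-1}{p\gamma}$, which is nonnegative because $|v| > 1/\gamma > (p-1)/(p\gamma)$. Using the subadditivity inequality $a^{1/p} \leq (a-b)^{1/p} + b^{1/p}$ for $0 \leq b \leq a$ (a direct consequence of concavity of $x \mapsto x^{1/p}$ on $[0,\infty)$ for $p>1$), with $a = |v|$ and $b = (p-1)/(p\gamma)$, I obtain
\begin{equation*}
0 \leq |v|^{1/p} - h_{p,\gamma}(v)^{1/p} \leq \bigl(\tfrac{p-1}{p\gamma}\bigr)^{1/p}.
\end{equation*}
Combining both cases yields a single constant $C_p$ depending only on $p$ such that $\bigl|h_{p,\gamma}(v)^{1/p} - |v|^{1/p}\bigr| \leq C_p \gamma^{-1/p}$ for every $v \in \reals$.

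Integrating this pointwise bound over $\om$ produces
\begin{equation*}
|J_\gamma(u) - J(u)| \leq \beta\, C_p\, |\om|\, \gamma^{-1/p},
\end{equation*}
which is independent of $u$ and vanishes as $\gamma \to \infty$, giving the claimed uniform convergence. The delicate point is the outer-region estimate, where a naive first-order expansion of $x^{1/p}$ at $x = |v|$ would generate a $|v|^{1/p-1}$ factor that blows up near zero; the use of the subadditivity inequality avoids this singularity and is the key technical step.
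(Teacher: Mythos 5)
Your proof is correct and follows essentially the same route as the paper: the same split of $\om$ into $\{|u|\le 1/\gamma\}$ and $\{|u|>1/\gamma\}$, the same triangle-inequality bound on the inner region, and the same $O(\gamma^{-1/p})$ bound on the outer region (the paper states the outer estimate directly, whereas you justify it explicitly via the subadditivity of $x\mapsto x^{1/p}$, which is the implicit ingredient there as well). Your explicit uniform constant $C_p\gamma^{-1/p}$ is a slightly cleaner packaging of the same argument.
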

\begin{proof}
 We argue the uniform convergence of $J_{\gamma}$ to $J$ by using the definition of the Huber regularization \eqref{eq:huber}. Since $J_{\gamma}$ and $J$ differ on the nonconvex term, we analyze the difference $| \fsparse_{p,\gamma} (u) -\fsparse_{p}(u) |$ in the sets $\om_\gamma = \{ x \in \om: |u(x)| \leq \frac{1}{\gamma} \}$ and $\om^c_\gamma = \{ x \in \om: |u(x)| >\frac{1}{\gamma} \}$ as follows: 
\begin{align}
\l| \int_{\om} h_{p, \gamma} (u)^\frac{1}{p} - |u|^{\frac{1}{p}}\, dx \r|	&\leq \int_{\om_{\gamma}}\l | h_{p, \gamma} (u)^\frac{1}{p} - |u|^{\frac{1}{p}} \r|\, dx  + \int_{\om^c_{\gamma}} \l | h_{p, \gamma} (u)^\frac{1}{p} - |u|^{\frac{1}{p}} \r| \, dx \nonumber \\
& \leq  \int_{\om_{\gamma}} \l| \frac{\gamma^{\frac{p-1}{p}}}{p^\frac1p}|u| - |u|^{\frac1p}  \r|\, dx  + \int_{\om^c_{\gamma}} \l | \l(|u| + \frac{1}{\gamma}\frac{1-p}{p}\r)^\frac{1}{p} - |u|^{\frac{1}{p}} \r| \, dx \nonumber
\end{align}
Using the fact that $|u(x )|\leq \frac{1}{\gamma}$ in $\om_\gamma$ we have
\begin{align}
\l| \int_{\om} h_{p, \gamma} (u)^\frac{1}{p} - |u|^{\frac{1}{p}}\, dx \r|	&\leq  \int_{\om_{\gamma}}  \frac{1}{ \gamma^{\frac{1}{p}} p^\frac1p} + {\frac{1}{\gamma^{\frac1p}}}  \, dx  + \frac{1}{\gamma^\frac{1}{p}}\int_{\om^c_{\gamma}}   \l|\frac{1-p}{p}\r|^\frac{1}{p} \, dx ,\nonumber \\
%
%&=  \int_{\om_{\gamma}}  \frac{1}{ \gamma^{\frac{1}{p}} p^\frac1p} + {\frac{1}{\gamma^{\frac1p}}}  \, dx + \frac{1}{\gamma^\frac{1}{p}}\int_{\om^c_{\gamma}}   \l|\frac{1-p}{p}\r|^\frac{1}{p} \, dx , \nonumber \\
&\leq  \int_{\om }  \frac{1}{ \gamma^{\frac{1}{p}} p^\frac1p} + {\frac{1}{\gamma^{\frac1p}}}  \, dx + \frac{1}{\gamma^\frac{1}{p}}\int_{\om}   \l|\frac{1-p}{p}\r|^\frac{1}{p} \, dx , \nonumber 
\end{align}
where the last terms clearly tends to 0 as $\gamma \arrow \infty$. %Moreover, this limit does not depend on $u$, implying the uniform convergence of $\fsparse_{p,\gamma}$ to $\fsparse_p$.
\end{proof}

%The next theorem addresses to the question about existence of a solution of problem \eqref{e:OCP2}.
%
%

 \section{Existence and Optimality Conditions for the regularized problem}
 
Our aim in this section is deriving an optimality system for problem \eqref{e:OCP2} via a DC--programming approach. As mentioned earlier, the key idea is introducing an $L^1$--norm penalization which allows us to formulate our problem as a minimization of a difference of convex functions, with functions $G$ and $H$ such that:
\begin{equation}\label{eq:dc}
J_\gamma(u) = G(u) - H(u).	 
\end{equation}
A function that can be expressed in this form is known as a DC--function and several problems involving this type of functions have been analyzed, see the monograph of Hiriart Urruty \cite{hiriart1989} or in \cite{dinh2014}. 

Let us focus on how to express the cost function of problem \eqref{e:OCP2} as a convenient \emph{difference of convex functions} and then rely on the theory of DC programming. We start by introducing the following quantity, which will be frequently used throughout this paper:

\begin{equation}
\delta_\gamma = \frac{\gamma^{\frac{p-1}{p}}}{p^{\frac1p}}.	
\end{equation}
The next step is to define $G$ and $H$ in \eqref{eq:dc} as follows: 

\begin{align}\label{eq:GH}
\begin{array}{ll}
&\begin{array}{lrlll}
	G: & L^2(\om) & \arrow &\reals \\
	   & u		& \mapsto &  G(u) &: = \frac12 \norm{Su +Sf-y_d}^2_{L^2(\om)}	 +\alpha \norm{u}^2_{L^2(\om)} + \beta \delta_\gamma \norm{u}_{L^1(\om)} \\
	   & & &  &= F(u) + \beta \delta_\gamma \norm{u}_{L^1(\om)},
\end{array} \\
&\begin{array}{lrll}
	H: & L^2(\om) & \arrow &\reals \\
	   & u		& \mapsto &  H(u) : =  \beta \l( \delta_\gamma \norm{u}_{L^1(\om)} - \fsparse_{p,\gamma}(u) \r).%\fsparse_{\gamma,p}{(u)}
\end{array}
\end{array}
\end{align}

\begin{lemma}\label{l:j} The real function $j : \reals \arrow \reals_+\cup \{0\}$, defined by
\begin{equation}
j(z) = \l\{ 
\begin{array}{ll}
\delta_\gamma |z| - \l(|z| + \frac{1}{\gamma} \frac{1-p}{p} \r)^{\frac{1}{p}}, & \text{ if } |z|  >  \frac{1}{\gamma} \\
0, & \text{ if } |z| \leq \frac{1}{\gamma} ,
\end{array}
\r.
\end{equation}
is nonnegative, convex  and continuously differentiable and its derivative in $z \in \reals$, is given by
\begin{equation}\label{eq:dj}
j'(z) = \l\{ 
\begin{array}{ll}
\delta_\gamma \, \sign(z) - \frac{1}{p}\l(|z| + \frac{1}{\gamma} \frac{1-p}{p} \r)^{\frac{1-p}{p}}\sign(z), & \text{ if } |z|  >  \frac{1}{\gamma} \\
0, & \text{ if } |z| \leq \frac{1}{\gamma} .
\end{array}
\r.
\end{equation}

\end{lemma}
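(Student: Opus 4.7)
The plan is to exploit the unified representation $j(z)=\delta_\gamma|z|-h_{p,\gamma}(z)^{1/p}$, valid on all of $\reals$: indeed, on $[-\tfrac1\gamma,\tfrac1\gamma]$ one has $h_{p,\gamma}(z)^{1/p}=(\gamma^{p-1}|z|^p/p)^{1/p}=\delta_\gamma|z|$, so the difference is identically $0$ there, while outside that interval the definition of $h_{p,\gamma}$ gives back the stated formula. This viewpoint makes the three claims (nonnegativity, $C^1$ regularity with the announced derivative, convexity) essentially a study of the scalar function $z\mapsto h_{p,\gamma}(z)^{1/p}$ against its tangent majorant $\delta_\gamma|z|$.

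For nonnegativity, by evenness it suffices to treat $z\ge 0$; the claim reduces to $z+\tfrac{1-p}{p\gamma}\le \delta_\gamma^{\,p}z^p=\gamma^{p-1}z^p/p$ for $z\ge 1/\gamma$. At $z=1/\gamma$ both sides equal $1/(p\gamma)$, and comparing derivatives one has $1\le \gamma^{p-1}z^{p-1}$ for $z\ge 1/\gamma$ since $p>1$. So the right-hand side dominates beyond the matching point, which yields $j(z)\ge 0$.

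For continuous differentiability, I would differentiate on each open piece and match. On $(1/\gamma,\infty)$ straightforward computation gives $j'(z)=\delta_\gamma-\tfrac1p\bigl(z+\tfrac{1-p}{p\gamma}\bigr)^{(1-p)/p}$, and the one-sided limit at $z=1/\gamma$ equals $\delta_\gamma-\tfrac1p\bigl(\tfrac{1}{p\gamma}\bigr)^{(1-p)/p}=\delta_\gamma-p^{-1/p}\gamma^{(p-1)/p}=\delta_\gamma-\delta_\gamma=0$, which agrees with the derivative on the flat piece. The symmetric argument handles $z<-1/\gamma$ and produces the stated sign factor, giving formula \eqref{eq:dj} and $j\in C^1(\reals)$.

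Convexity then follows most cleanly by showing $j'$ is nondecreasing. On $(1/\gamma,\infty)$ the power $(1-p)/p<0$, so $\bigl(z+\tfrac{1-p}{p\gamma}\bigr)^{(1-p)/p}$ is decreasing in $z$, whence $j'$ is increasing; on $(-\infty,-1/\gamma)$ the same monotonicity argument (with an overall sign change coming from $\sign(z)=-1$) shows $j'$ is again increasing in $z$; on $[-1/\gamma,1/\gamma]$ we have $j'\equiv 0$, which glues continuously to the values $0$ at both boundary points. Hence $j'$ is monotone nondecreasing on $\reals$, so $j$ is convex. The one point requiring care—the main technical obstacle—is the matching of the one-sided derivatives at $|z|=1/\gamma$, because this is precisely where the specific choice of the affine constant $\tfrac{1-p}{p\gamma}$ in the Huber function is tuned to make both value and derivative continuous, and it is the continuity of $j'$ there that allows the piecewise monotonicity to imply global convexity.
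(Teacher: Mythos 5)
Your proof is correct, and it reaches the three conclusions by routes that differ from the paper's in each case. For differentiability at $|z|=\tfrac1\gamma$, the paper computes the difference quotient directly and kills the nonlinear remainder with a binomial expansion, whereas you match one-sided limits of $j'$; your route is shorter but implicitly invokes the standard mean-value argument that a function continuous at a point whose derivative has a limit there is differentiable there, so for completeness you should record that $j$ is continuous at $\pm\tfrac1\gamma$ (both branches give the value $0$, since $\delta_\gamma\cdot\tfrac1\gamma=(\tfrac{1}{p\gamma})^{1/p}$). For convexity, the paper observes that $z\mapsto(z+\tfrac{1-p}{p\gamma})^{1/p}$ is concave as an affine precomposition of a concave power, so the outer branch is convex and increasing on $(\tfrac1\gamma,\infty)$, and then composes with $|\cdot|$; you instead show $j'$ is nondecreasing across all three pieces. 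Your version has the advantage of making explicit the gluing at the kink (the derivative values $0$ on both sides), which the paper's composition argument leaves somewhat implicit when passing from convexity of each branch to convexity of the piecewise function. Finally, the paper deduces nonnegativity \emph{from} convexity together with the vanishing of $j$ on $[-\tfrac1\gamma,\tfrac1\gamma]$, while you prove it independently by raising to the $p$-th power and comparing values and derivatives at the matching point $z=\tfrac1\gamma$; your argument is self-contained and does not depend on having already established convexity, at the cost of one extra elementary computation. Both proofs hinge on the same key fact, namely that the affine constant $\tfrac{1-p}{p\gamma}$ is tuned so that value and derivative match at $|z|=\tfrac1\gamma$, and you identify this correctly as the technical crux.
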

\begin{proof}
%	From its definition, if $|z| \geq \frac{1}{\gamma}$ then $ |z| + \frac{1}{\gamma} \frac{1-p}{p} > \frac{1}{\gamma} + \frac{1}{\gamma} \frac{1-p}{p} = \frac{1}{\gamma}\frac{1}{p} >0$, therefore we have,
%	\begin{equation}\label{eq:j.1}
%		\l(|z| + \frac{1}{\gamma} \frac{1-p}{p} \r)^\frac{1}{p}> \frac{1}{(\gamma p)^{\frac1p}}.
%	\end{equation}
%By using \eqref{eq:j.1}
%	\begin{align*}
%	%
%	\delta |z| - \l(|z| + \frac{1}{\gamma} \frac{1-p}{p} \r)^{\frac{1}{p}} & > \delta \frac{1}{ \gamma} 	- \l(|z| + \frac{1}{\gamma} \frac{1-p}{p} \r)^{\frac{1}{p}}  \\
%	 & = \frac{1}{(\gamma p)^{\frac1p}} - \l(|z| + \frac{1}{\gamma} \frac{1-p}{p} \r)^{\frac{1}{p}} 
%	\end{align*}
%
Let us first check differentiability. It is clear that $j$ is differentiable if $|z| < \frac{1}{\gamma}$ or $|z| > \frac{1}{\gamma}$, where $j'(z) = 0$ and $j'(z) = \delta_\gamma \sign(z)- \frac{1}{p}(|z| + \frac{1}{\gamma}\frac{1-p}{p})^{\frac{1-p}{p}} \sign(z)$, respectively. Therefore, we check differentiability at $z = \pm\frac{1}{\gamma}$. Consider $z=-\frac{1}{\gamma}$, since $j(\pm\frac{1}{\gamma}) = 0$ and \\ \(|-\frac{1}{\gamma}+h| < \frac{1}{\gamma}\) for sufficiently small $h$, we have that
$ \ds	\lim_{h \arrow 0^+} \frac{j(z + h) - j(z)}{h}  = \lim_{h \arrow 0^+} \frac{j(-\frac{1}{\gamma} + h)}{h} \allowbreak = 0$.
On the other hand, since $-\frac{1}{\gamma} + h <0$ for sufficiently small $h$
\begin{align*}
\ds	\lim_{h \arrow 0^-} &\frac{j(z + h) - j(z)}{h}  = \lim_{h \arrow 0^-}\frac{j(-\frac{1}{\gamma} + h)}{h}  \\
&= \lim_{h \arrow 0^-}\frac{\delta_\gamma \l( \frac{1}{\gamma} - h \r) - \l( \frac{1}{\gamma}-h + \frac{1}{\gamma}\frac{1-p}{p} \r)^{\frac{1}{p}}}{h} =\lim_{h \arrow 0^-}\frac{ \l( \frac{1}{\gamma p} \r)^{\frac{1}{p}} - \delta_\gamma h  - \l( \frac{1}{\gamma p}-h  \r)^{\frac{1}{p}}}{h} 
,
\end{align*}
	% \frac{ \delta \frac{1}{\gamma} + h - \l(\frac{1}{\gamma} + h + \frac{1}{\gamma} \frac{1-p}{p} \r)^{\frac{1}{p}} }{h} \\
	%& = \frac{ \delta \frac{1}{\gamma} + h - \l(\frac{1}{\gamma} + h + \frac{1}{\gamma} \frac{1-p}{p} \r)^{\frac{1}{p}} }{h} 
where we apply the binomial theorem to get
\begin{align*}
\lim_{h \arrow 0^-}\frac{ \l( \frac{1}{\gamma p} \r)^{\frac{1}{p}} - \delta_\gamma h  - \l( \frac{1}{\gamma p}-h  \r)^{\frac{1}{p}}}{h} &= 
\lim_{h \arrow 0^-}\frac{ \l( \frac{1}{\gamma p} \r)^{\frac{1}{p}} - \delta_\gamma h  - \l( \frac{1}{\gamma p} \r)^{\frac{1}{p}} -\frac{1}{p} \l( \frac{1}{\gamma p} \r)^{\frac{1-p}{p}}h +o(h) }{h} \\
&=\lim_{h \arrow 0^-} \frac{o(h)}{h} =0.
\end{align*}
Therefore $j'(-\frac{1}{\gamma})=0$. Analogously, it also follows that $j'(\frac{1}{\gamma})=0$, which implies formula \eqref{eq:dj}. Moreover, a straightforward observation reveals that $j'$ is continuous, therefore $j$ is continuously differentiable. Convexity follows by noticing that  the function  $ \ds\reals_+ \ni z \mapsto (z + \frac{1}{\gamma}\frac{1-p}{p} )^{1/p}$ is concave, because it is the composition of an affine function and a concave function. Thus, for $z >\frac1\gamma,$ we find that the function 
 \[  \ds\reals_+ \ni z \mapsto  \delta_\gamma z - \l(z + \frac{1}{\gamma}\frac{1-p}{p} \r)^{1/p} \]
is convex and monotonically increasing, which, by composition with the absolute value, implies the convexity of $j$. Finally, we make the simple but important observation that $j$ vanishes in the interval $[-\frac{1}{\gamma}, \frac{1}{\gamma}]$. This, together with the convexity of $j$, implies that $j$ is nonnegative.
\end{proof}
\linebreak

Now, by employing the function $j$ we can write $H$ as follows:

\begin{equation}\label{eq:Hj}
\begin{array}{lrll}
	H: & L^2(\om) & \arrow &\reals \\
	   & u		& \mapsto &  H(u)  = \ds\int_{\om} j(u) dx .%\fsparse_{\gamma,p}{(u)}
\end{array}	
\end{equation}

\begin{lemma}\label{l:convexity}
	The functions $G$ and $H$ defined in \eqref{eq:GH} are convex.
\end{lemma}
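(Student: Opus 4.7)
The plan is to treat $G$ and $H$ separately, exploiting that both decompose into pieces whose convexity is easy to verify individually.

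For $G$, I would observe that it is a finite sum of three terms. The first, $\tfrac{1}{2}\norm{Su+Sf-y_d}^2_{L^2(\om)}$, is the composition of a convex continuous function (half the squared $L^2$-norm centered at $y_d-Sf$) with the affine continuous map $u\mapsto Su$ induced by the linear solution operator $S:L^2(\om)\to H_0^1(\om)\hookrightarrow L^2(\om)$; convexity is therefore preserved. The second term, $\tfrac{\alpha}{2}\norm{u}^2_{L^2(\om)}$, is strictly convex since $\alpha>0$. The third term, $\beta\delta_\gamma \norm{u}_{L^1(\om)}$, is the $L^1$-norm rescaled by the positive constant $\beta\delta_\gamma$, hence convex. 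The sum of convex functions is convex, so $G$ is convex.

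For $H$, I would first establish the identity $H(u) = \int_\om j(u(x))\,dx$ asserted in \eqref{eq:Hj} by a pointwise computation. Write
\begin{equation*}
H(u) = \beta\int_\om\Bigl(\delta_\gamma|u(x)| - h_{p,\gamma}(u(x))^{1/p}\Bigr)\,dx.
\end{equation*}
On the set where $|u(x)|\le 1/\gamma$, definition \eqref{eq:huber} gives $h_{p,\gamma}(u(x))^{1/p} = (\gamma^{p-1}/p)^{1/p}|u(x)| = \delta_\gamma|u(x)|$, so the integrand vanishes, matching $j(u(x))=0$. On the complementary set $|u(x)|>1/\gamma$, the integrand equals $\delta_\gamma|u(x)|-(|u(x)|+\tfrac{1}{\gamma}\tfrac{1-p}{p})^{1/p}$, which is exactly $j(u(x))$. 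Thus $H(u)=\beta\int_\om j(u(x))\,dx$ (absorbing $\beta>0$ into the integrand preserves convexity).

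Once this representation is in place, convexity of $H$ follows from the pointwise convexity of $j$ (Lemma~\ref{l:j}): for $u_0,u_1\in L^2(\om)$ and $\lambda\in[0,1]$, apply the scalar convexity inequality $j(\lambda u_1(x)+(1-\lambda)u_0(x))\le \lambda j(u_1(x))+(1-\lambda)j(u_0(x))$ pointwise a.e.\ in $\om$, and integrate to obtain $H(\lambda u_1+(1-\lambda)u_0)\le \lambda H(u_1)+(1-\lambda)H(u_0)$. The only subtlety worth a remark is measurability/integrability: since $j$ is continuous, $j\circ u$ is measurable; and since $0\le j(z)\le \delta_\gamma|z|$ for all $z\in\reals$ (because $j\ge 0$ by Lemma~\ref{l:j} and $H(u)=\beta(\delta_\gamma\|u\|_{L^1}-\fsparse_{p,\gamma}(u))$ is bounded by $\beta\delta_\gamma\|u\|_{L^1}$), the integral is finite for every $u\in L^2(\om)\subset L^1_{\mathrm{loc}}(\om)$. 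The main obstacle, if any, is really just this careful identification of the integrand with the scalar function $j$ from Lemma~\ref{l:j}; once done, convexity is immediate and no further work is required.
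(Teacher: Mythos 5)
Your argument is correct and follows essentially the same route as the paper: $G$ is convex as a sum of convex terms (a squared norm composed with an affine map, the Tikhonov term, and a scaled $L^1$-norm), and the convexity of $H$ is reduced via the representation \eqref{eq:Hj} to the pointwise convexity of $j$ established in Lemma~\ref{l:j}. The paper's proof is just two sentences that leave the verification of \eqref{eq:Hj} and the integration of the scalar convexity inequality implicit; you have simply written out those details, including the useful check that $h_{p,\gamma}(v)^{1/p}=\delta_\gamma|v|$ on $|v|\le 1/\gamma$.
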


\begin{proof}
Since $\alpha \geq 0$ and $\beta \geq 0$, it is clear that function $G$ is strictly convex if $\alpha + \beta  >0$. In the case of $H$, convexity  directly follows from Lemma \ref{l:j}.
\end{proof}
\linebreak

Having defined the functions $H$ and $G$, it is clear that the representation \eqref{eq:dc} of $J_\gamma$  has been set up. Therefore, $J_\gamma$ is a DC-function and we can express optimality conditions in terms of $G$ and $H$ by considering the following formulation for problem \eqref{e:OPT1}:

\begin{equation}
\label{e:OPT-DC}\tag{DC}
\displaystyle\min_{u} J_\gamma(u)= G(u) - H(u),
\end{equation} 
\begin{lemma}\label{l:diffH}
	The function $H$ defined in \eqref{eq:GH} is G\^ateaux differentiable, and its derivative $H_G'( u;\cdot)$ is represented by $( \beta  w , \cdot )$, where $ w \in L^2(\om)$ depends on $ u$, $p$ and $\gamma$, and it is given by
\begin{equation}\label{eq:dH}
		\ds
 w (x): =
\left\{ \begin{array}{ll}
\ds \l[ \delta_\gamma - \frac1p \l( | u(x)| + \frac{1}{\gamma}\frac{1-p}{ p } \r)^{\frac{1-p}{p}} \r]\sign ( u (x)),& \hbox{if } | u(x)| > \frac{1}{\gamma},\vspace{2mm}\\
0,&   \text{ otherwise. }
\end{array} \right.
	\end{equation} 
\end{lemma}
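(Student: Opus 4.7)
The representation \eqref{eq:Hj} recasts $H$ as a Nemytskii-type functional built on the $C^1$ scalar function $j$ from Lemma \ref{l:j} (the multiplicative constant $\beta$ having been displayed in \eqref{eq:GH}). My plan is therefore to prove differentiability by passing the difference quotient under the integral sign via dominated convergence, with $j' $ supplying the pointwise limit.

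First I would fix $u,h \in L^2(\om)$ and $t\neq 0$, and write
\[
\frac{H(u+th)-H(u)}{t} \;=\; \beta\int_{\om} \frac{j(u(x)+th(x))-j(u(x))}{t}\,dx.
\]
Since $j\in C^1(\reals)$ by Lemma \ref{l:j}, the mean value theorem gives, for a.e.\ $x$, some $\theta(x,t)\in(0,1)$ with
\[
\frac{j(u(x)+th(x))-j(u(x))}{t} \;=\; j'\bigl(u(x)+\theta(x,t)\,t\,h(x)\bigr)\,h(x),
\]
which converges pointwise to $j'(u(x))\,h(x)$ as $t\to 0$ by continuity of $j'$.

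The crucial auxiliary estimate is the uniform bound $|j'(z)|\leq\delta_\gamma$ for every $z\in\reals$. On $[-1/\gamma,1/\gamma]$ this holds trivially since $j'\equiv 0$, while for $|z|>1/\gamma$ formula \eqref{eq:dj} expresses $|j'(z)|$ as the distance between $\delta_\gamma$ and $\frac{1}{p}\!\left(|z|+\tfrac{1}{\gamma}\tfrac{1-p}{p}\right)^{(1-p)/p}$; one checks that the latter quantity equals $\delta_\gamma$ at $|z|=1/\gamma^{+}$ and decreases monotonically to $0$ as $|z|\to\infty$, so it stays in $[0,\delta_\gamma]$. Consequently the integrands above are dominated by $\delta_\gamma|h(x)|$, which lies in $L^1(\om)$ because $|\om|<\infty$ and $h\in L^2(\om)\subset L^1(\om)$.

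Dominated convergence then yields
\[
H'_G(u;h) \;=\; \beta\int_{\om} j'(u(x))\,h(x)\,dx \;=\; (\beta w,h)_{L^2(\om)}, \qquad w(x):=j'(u(x)),
\]
and \eqref{eq:dH} is just \eqref{eq:dj} with $z=u(x)$. Finally, $w\in L^\infty(\om)\subset L^2(\om)$ by the same uniform bound $|j'|\leq \delta_\gamma$, so the linear form $h\mapsto (\beta w,h)$ is continuous on $L^2(\om)$. I do not anticipate a real obstacle: linearity and continuity of the directional derivative are immediate, and the only slightly delicate point is the uniform bound on $|j'|$, which is a short monotonicity computation on the explicit expression in \eqref{eq:dj}.
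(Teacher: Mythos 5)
Your proof is correct, and it reaches the conclusion by a genuinely different (more elementary) route than the paper. The paper isolates the same key estimate you do --- the uniform bound $|j'(z)|\leq\delta_\gamma$ --- but then delegates the analytic work to two cited results: a theorem of Ambrosetti--Prodi guaranteeing that the superposition operator $u\mapsto j(u)$ is G\^ateaux differentiable from $L^2(\om)$ into $L^2(\om)$ with derivative $v\mapsto j'(u)v$, followed by a chain-rule theorem from Ciarlet to differentiate the integral functional. You instead work directly with the scalar functional $u\mapsto\int_\om j(u)\,dx$: mean value theorem on the $C^1$ function $j$, the bound $|j'|\leq\delta_\gamma$ as the dominating function, and dominated convergence. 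This is self-contained and sidesteps the (genuinely more delicate) question of differentiability of the Nemytskii operator between $L^2$ spaces, which is stronger than what is actually needed to differentiate $H$. What the paper's route buys is brevity and the reusable operator-level statement. Two small merits of your write-up worth keeping: you actually verify the endpoint identity $\frac1p\l(\frac{1}{\gamma p}\r)^{(1-p)/p}=\delta_\gamma$ and the monotonicity that yield the bound on $j'$ (the paper asserts the bound without computation), and you track the factor $\beta$ explicitly, which the paper's formula \eqref{eq:Hj} silently drops before reinstating it in the final identity $H_G'(u;v)=(\beta w,v)$.
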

\begin{proof} First, notice that $j'(z)$, given by \eqref{eq:dj}, satisfies that 
\begin{equation}\label{eq:wbound}
0<|j'(z)|=\l|\delta_\gamma - \frac{1}{p} \l( |z| + \frac{1}{\gamma} \frac{1-p}{p}\r)^{\frac{1-p}{p}}\r|< \delta_\gamma, \quad \text{for } |z| > \frac{1}{\gamma}. 
\end{equation}
Therefore, by using \eqref{eq:wbound} and the properties of $j$ established in Lemma \ref{l:j}, we apply \cite[Theorem 2.7, pg. 19]{ambro95} in order to deduce that the superposition operator $u \mapsto j(u)$ is G\^ateaux differentiable from $L^2(\om)$ into $L^2(\om)$. In addition, its G\^ateaux derivative in the direction $v$ is given by $j'(u)v \in L^2(\om)$. Hence, Theorem 7.4-1 in \cite{ciarlet2013} allows us to compute the G\^ateaux derivative of $H$ at $\bar u$ in any direction $v \in L^2(\om)$ by
\begin{equation}
H'_G (u,v) = \int_\om j'( u(x))v dx = (\beta w, v),
\end{equation}
with $w$ given by   \eqref{eq:dH}.
\end{proof}
%{\color{black}
\begin{theorem} \label{t:exist}
Let $U_{ad}$ be the feasible control set, and assume that $U_{ad}:=\{u\in L^2(\om): \Delta u \in H^{-1}(\om), \exists v\in \bar B(0,M) \subset L^2(\om) \text{ such that } -\Delta u + \frac{1}{\varepsilon} u =  \frac{1}{\varepsilon} v\}$, for a fixed $\varepsilon>0$ for a positive constant $M$. There exists a solution $\bar u  \in L^2(\om)$ for the regularized problem  \eqref{e:OCP2}.
\end{theorem}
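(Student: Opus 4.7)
The plan is to use the direct method of calculus of variations: take a minimizing sequence in $U_{ad}$, extract a strongly $L^2$-convergent subsequence via the compactness encoded in $U_{ad}$, and pass to the limit using strong continuity of $F$ together with Lemma \ref{l:conv1} for the nonconvex term. The role of $U_{ad}$ is precisely to provide $H^1_0$-boundedness (and hence $L^2$-compactness) of a minimizing sequence, which is what rescues the argument in the absence of weak lower semicontinuity for $\fsparse_{p,\gamma}$.

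First I would note that $J_\gamma$ is bounded below (both $F$ and $\fsparse_{p,\gamma}$ are nonnegative) and that $U_{ad}$ is nonempty, so $m:=\inf_{u\in U_{ad}} J_\gamma(u)$ is finite. Choose a minimizing sequence $(u_n)\subset U_{ad}$ with $J_\gamma(u_n)\to m$. By definition of $U_{ad}$, there exists $v_n\in \bar B(0,M)\subset L^2(\om)$ such that $-\Delta u_n + \tfrac{1}{\varepsilon} u_n = \tfrac{1}{\varepsilon} v_n$ in $H^{-1}(\om)$. Since $\|v_n\|_{L^2(\om)}\le M$, a standard energy estimate (test with $u_n$) gives a uniform bound for $u_n$ in $H_0^1(\om)$. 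By Rellich--Kondrachov, there exists a subsequence (not relabeled) and $\bar u\in H_0^1(\om)$ such that $u_n\rightharpoonup \bar u$ in $H_0^1(\om)$ and $u_n\to \bar u$ strongly in $L^2(\om)$ and hence in $L^1(\om)$. Simultaneously, $v_n\rightharpoonup \bar v$ in $L^2(\om)$ along a further subsequence, with $\bar v\in \bar B(0,M)$ since the ball is weakly closed. Passing to the limit in the elliptic identity $-\Delta u_n + \tfrac{1}{\varepsilon} u_n = \tfrac{1}{\varepsilon} v_n$ in $H^{-1}(\om)$ yields $-\Delta \bar u + \tfrac{1}{\varepsilon}\bar u = \tfrac{1}{\varepsilon}\bar v$, so $\bar u\in U_{ad}$.

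It remains to show $J_\gamma(\bar u)\le m$. Strong $L^2$-convergence gives $F(u_n)\to F(\bar u)$ since $F$ is continuous on $L^2(\om)$ (the operator $S$ is continuous and the quadratic terms are continuous in the $L^2$-topology). The nonconvex penalty is the delicate part, and it is precisely where Lemma \ref{l:conv1} does the work: since $u_n\to \bar u$ in $L^1(\om)$, we obtain $\fsparse_{p,\gamma}(u_n)\to \fsparse_{p,\gamma}(\bar u)$. Combining, $J_\gamma(u_n)\to J_\gamma(\bar u)=m$, so $\bar u$ is a minimizer in $U_{ad}$.

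The main potential obstacle is the nonconvex, nonsmooth character of $\fsparse_{p,\gamma}$, which blocks the usual weak lower semicontinuity argument; this is circumvented by extracting $L^2$-strong convergence from the admissible set (rather than mere weak convergence in $L^2$), so that Lemma \ref{l:conv1} gives full continuity rather than only lower semicontinuity. A secondary point worth checking carefully is that the equation $-\Delta u_n + \tfrac{1}{\varepsilon}u_n = \tfrac{1}{\varepsilon}v_n$ in $H^{-1}(\om)$ passes to the weak limit, which is routine since $-\Delta$ is continuous from $H_0^1(\om)$ into $H^{-1}(\om)$ and weak convergence in $L^2$ implies weak convergence in $H^{-1}$.
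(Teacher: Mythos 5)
Your proposal is correct and follows essentially the same route as the paper: a minimizing sequence in $U_{ad}$, the $H_0^1$-bound extracted from the defining elliptic identity, Rellich--Kondrachov to get strong $L^2$ (hence $L^1$) convergence, and Lemma \ref{l:conv1} plus continuity of $F$ to pass to the limit. You in fact supply two details the paper leaves implicit (the energy estimate giving the $H_0^1$ bound and the verification that $\bar u\in U_{ad}$ by passing to the limit in $-\Delta u_n+\tfrac{1}{\varepsilon}u_n=\tfrac{1}{\varepsilon}v_n$), but the argument is the same.
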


\begin{proof} 
Existence of a solution can be argued by standard techniques. Let us fix $\gamma>0$ and consider a minimizing sequence $(u_k)_{k\in\mathbb{N}}$ for $J_\gamma$, defined in   $U_{ad}$. By the definition of $U_{ad}$, there exists a bounded sequence $(v_k) \subset \bar B(0,M)$ that satisfy $-\Delta u_k + \frac{1}{\varepsilon}u_k =\frac{1}{\varepsilon}v_k$ and $y_k \in H_0^1(\om)$ associated to $u_k$. Therefore, we extract  (without renaming) a weakly convergent subsequence $(u_k)_{k\in \mathbb N}$ in $H_0^1(\om)$ having weak limit $\bar u \in U_{ad}$. Let us denote $\bar y=S\bar u$. Moreover, because the compact embedding $H_0^1(\om)\hookrightarrow L^2(\om)$ we have that $u_k \arrow \bar u$  and $y_k \arrow \bar y $ strongly in $L^2(\om)$.

Using Lemma \eqref{l:conv1} and the continuity of the remaining terms of $J_\gamma$ we can pass to the limit:
\begin{align}
	J_\gamma(\bar u)  
				      % & =  \lim_{k\arrow \infty}( G(v_k) - (\beta \bar w,v_k -\bar u)  ) - H( \bar u) \nonumber \\
				      & =  \lim_{k\arrow \infty} \frac{1}{2} \norm{y_k -y_d}_{L^2(\om)} + \frac{\alpha}{2} \norm{u_k}_{L^2(\om)} + \fsparse_{p,\gamma}{u_k}
				       = \inf_{u \in U_{ad}} J_\gamma (u).
\end{align}

\end{proof}
%}

\subsection{First--order necessary conditions}
The following part of this paper moves on describing the derivation of first order necessary optimality conditions for problem \eqref{e:OCP2}. The conditions for local and global optimality can be found in \cite[Proposition 3.1 and 3.2]{hiriart1989} or in \cite{florettli2001}. We will use the following well known result from DC--programming theory, which permits the characterization of local minima.

\begin{proposition} Let $G$ and $H$, the convex functions defined in \eqref{eq:GH}. If $\bar u$ is a local minimum of the  DC--function $J_\gamma = G - H$, then $\bar u$ satisfies the following critical point condition:
\begin{equation}\label{eq:dc_optimality}
	\partial H(\bar u) \subset \partial G( \bar u).
\end{equation}	
\end{proposition}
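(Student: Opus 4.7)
The plan is a direct subgradient calculation that exploits both the local minimality of $\bar u$ and the subgradient inequality for the convex function $H$. I would start by fixing an arbitrary $\xi \in \partial H(\bar u)$ with the aim of showing $\xi \in \partial G(\bar u)$. By definition of the convex subdifferential,
\[ H(u) \geq H(\bar u) + (\xi, u - \bar u)_{L^2(\om)} \quad \text{for all } u \in L^2(\om). \]
Meanwhile, local optimality of $\bar u$ for $J_\gamma = G - H$ furnishes a neighborhood $V$ of $\bar u$ on which $G(u) - H(u) \geq G(\bar u) - H(\bar u)$. Adding the latter inequality to the subgradient inequality above (restricted to $V$) yields the local subgradient estimate
\[ G(u) \geq G(\bar u) + (\xi, u - \bar u)_{L^2(\om)} \quad \text{for all } u \in V. \]

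The next step is to promote this local inequality to a global one by invoking the convexity of $G$ established in Lemma \ref{l:convexity}. For an arbitrary $u \in L^2(\om)$, I would choose $\lambda \in (0,1]$ small enough that $u_\lambda := \bar u + \lambda(u - \bar u)$ lies in $V$. Applying the local inequality at $u_\lambda$ gives $G(u_\lambda) - G(\bar u) \geq \lambda\,(\xi, u - \bar u)_{L^2(\om)}$, while convexity of $G$ yields $G(u_\lambda) \leq (1-\lambda) G(\bar u) + \lambda G(u)$. Combining these two and dividing through by $\lambda > 0$ produces the global subgradient inequality $G(u) \geq G(\bar u) + (\xi, u - \bar u)_{L^2(\om)}$, which is exactly the condition $\xi \in \partial G(\bar u)$. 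Since $\xi \in \partial H(\bar u)$ was arbitrary, the inclusion \eqref{eq:dc_optimality} follows.

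The argument is essentially mechanical and I do not anticipate a genuine obstacle; the one point that deserves care is the passage from the local to the global subgradient inequality, which must be carried out using only the convexity of $G$ and the scaling $u \mapsto u_\lambda$, without appealing to any regularity of $H$. This matches the level of generality of the DC framework and, in particular, does not exploit the Gâteaux differentiability of $H$ derived in Lemma \ref{l:diffH}; as a by-product, the same proof specializes to the smoother setting and shows that in that case \eqref{eq:dc_optimality} reduces to the single-valued inclusion $\beta w \in \partial G(\bar u)$, with $w$ given by \eqref{eq:dH}, which will be the form used later for the numerical treatment.
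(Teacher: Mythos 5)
Your argument is correct and complete. Note that the paper does not actually prove this proposition: it is stated as a ``well known result from DC--programming theory'' and attributed to the references of Hiriart-Urruty and Flores-Baz\'an--Oettli, so there is no in-paper proof to compare against. What you give is precisely the standard textbook argument for that cited result: fix $\xi\in\partial H(\bar u)$, add the subgradient inequality for $H$ to the local optimality inequality for $G-H$ to obtain the subgradient inequality for $G$ on a neighborhood $V$ of $\bar u$, and then upgrade it to a global inequality via convexity of $G$ and the scaling $u_\lambda=\bar u+\lambda(u-\bar u)$. Each step is valid in the $L^2(\Omega)$ setting used here ($G$ and $H$ are finite-valued and convex by Lemma \ref{l:convexity}, and $u_\lambda\in V$ for $\lambda$ small since $\|u_\lambda-\bar u\|=\lambda\|u-\bar u\|$), and your closing observation that the inclusion collapses to the single-valued condition $\partial H(\bar u)=\{\beta\bar w\}\subset\partial G(\bar u)$ once Lemma \ref{l:diffH} is invoked is exactly how the paper uses the proposition in Theorem \ref{t:fonc}. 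The only thing your write-up buys beyond the paper is self-containedness; conversely, the cited sources also supply the companion global-optimality characterization ($\partial_\varepsilon H(\bar u)\subset\partial_\varepsilon G(\bar u)$ for all $\varepsilon$), which your local argument does not and need not reproduce.
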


The next result establishes an optimality system with the help of the last proposition.
\begin{theorem}\label{t:fonc} Let $\bar u$ be a solution of \eqref{e:OCP2}, then there exist:  $\bar y = S \bar u$ in $H_0^1(\om)$, %solution of \eqref{eq:state}, an 
an adjoint state $\phi \in H_0^1(\om)$, a multiplier $\zeta \in L^2(\om)$ and $\bar w$ given by \eqref{eq:dH} such that the following optimality system is satisfied: 
\begin{center}
\begin{subequations}
\label{eq:OPT_P'}
\begin{align}
& \begin{array}{rll}
A \bar y &= \bar u +f,  & \text{in } \om, \\
	   \bar y & = 0, & \text{on } \ga, 	
\end{array}\label{eq:state1}\\
%& 
& \begin{array}{rll}
A^* \bar \phi &= \bar y - y_d,  & \text{in } \om, \\
	   \bar \phi & = 0, & \text{on } \ga, 	
\end{array} \label{eq:adj1}\\
%
%&
& \bar \phi +\alpha \bar u + \beta \, (\delta_\gamma \, \zeta -\bar w ) = 0,	\label{eq:gradeq}\\
%&
& \begin{array}{lll}
\zeta(x) &=1, & \text{ if } \bar u (x) >0, \\
\zeta(x) &=-1, & \text{ if } \bar u (x) <0, \\
|\zeta(x)| &\leq 1,  & \text{ if } \bar u (x) =0, \\ 
\end{array} \label{eq:zeta} 
 \text{ for almost all } x \in \om. %\nonumber
\end{align}
\end{subequations}	
\end{center}

\end{theorem}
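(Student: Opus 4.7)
The plan is to translate the DC critical point inclusion $\partial H(\bar u)\subset \partial G(\bar u)$, provided by the preceding proposition, into the concrete system \eqref{eq:OPT_P'} by computing each subdifferential separately and then identifying $S^{*}(\bar y - y_d)$ with the adjoint state.

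First, I would handle $\partial H(\bar u)$. Since $H$ is convex (Lemma \ref{l:convexity}) and G\^ateaux differentiable on $L^2(\om)$ with derivative represented by $\beta \bar w$ (Lemma \ref{l:diffH}), its subdifferential reduces to the singleton $\partial H(\bar u) = \{\beta\bar w\}$, with $\bar w$ given by \eqref{eq:dH}. Next I would decompose $G = F + \beta\delta_\gamma\|\cdot\|_{L^1(\om)}$. The smooth part $F$ is Fr\'echet differentiable on $L^2(\om)$ with $F'(u) = S^{*}(Su + Sf - y_d) + \alpha u$, and the $L^1$-norm is continuous on $L^2(\om)$ (since $\om$ is bounded) and convex. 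Hence the Moreau--Rockafellar sum rule applies and yields
\begin{equation*}
\partial G(\bar u) \;=\; F'(\bar u) + \beta\delta_\gamma\,\partial\!\left(\|\cdot\|_{L^1(\om)}\right)(\bar u).
\end{equation*}
The subdifferential of the $L^1$-norm at $\bar u$ is the well-known set of functions $\zeta \in L^\infty(\om)$ satisfying $\|\zeta\|_{L^\infty}\le 1$ and $\zeta(x)\in\mathrm{sign}(\bar u(x))$ a.e., which is exactly the characterization \eqref{eq:zeta}.

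Combining both subdifferentials, the inclusion $\beta\bar w \in \partial G(\bar u)$ is equivalent to the existence of a $\zeta$ satisfying \eqref{eq:zeta} such that
\begin{equation*}
S^{*}(S\bar u + Sf - y_d) + \alpha\bar u + \beta\delta_\gamma\zeta - \beta\bar w \;=\; 0 \quad\text{in }L^2(\om).
\end{equation*}
Setting $\bar y := S(\bar u + f) \in H_0^1(\om)$, the state equation \eqref{eq:state1} is simply the definition of $S$. Introducing $\bar\phi := S^{*}(\bar y - y_d)\in H_0^1(\om)$, the definition of $S^{*}$ is precisely the weak form of the adjoint equation \eqref{eq:adj1}. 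Substituting $\bar\phi$ in the displayed identity gives the gradient equation \eqref{eq:gradeq}.

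The only nontrivial step is the application of the sum rule and the representation of the subdifferential of $\|\cdot\|_{L^1(\om)}$ on $L^2(\om)$, but both are standard once one notes that $\om$ is bounded so that the continuity hypothesis of Moreau--Rockafellar is satisfied and $L^\infty(\om)\hookrightarrow L^2(\om)$. No further technical obstacle is expected, because the ``sign'' multifunction already accounts for the only nonsmoothness that remains after the Huber regularization, and the identification of $S^{*}(\bar y - y_d)$ with the weak solution of the adjoint PDE is classical.
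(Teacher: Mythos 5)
Your proposal is correct and follows essentially the same route as the paper: translate the critical point inclusion $\partial H(\bar u)\subset\partial G(\bar u)$ into $\beta\bar w\in \nabla F(\bar u)+\beta\delta_\gamma\,\partial\norm{\cdot}_{L^1(\om)}(\bar u)$ using the singleton structure of $\partial H$ from Lemma \ref{l:diffH}, the sum rule for $\partial G$, the standard pointwise characterization of the $L^1$-subdifferential, and the identification $\bar\phi=S^*(\bar y-y_d)$ with the weak solution of the adjoint equation. Your explicit justification of the Moreau--Rockafellar hypothesis (continuity of the $L^1$-norm on $L^2(\om)$ for bounded $\om$) is a small but welcome addition that the paper leaves implicit.
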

\begin{proof} Clearly, equation \eqref{eq:state1} is equivalent to $S \bar u = \bar y$. By standard properties of subdifferential calculus, see for example \cite{hiriart2012}, the subdifferential of  $G$ at $\bar u$ is given by $\partial G(\bar u) = \nabla f(\bar u) + \beta\delta_\gamma\partial\, \norm{\cdot}_{L^1(\om)}(\bar u)   $.
By Lemmas \ref{l:conv1} and \ref{l:diffH}, it follows that $\partial H(\bar u)$ consists in the singleton $\{ \bar w \}$. Thus, condition \eqref{eq:dc_optimality} becomes 
\begin{align}\label{eq:fonc_1}
	\bar w & \in \nabla f (\bar u ) +\beta \delta_\gamma \partial\, \norm{\cdot}_{L^1(\om)}(\bar u).
	%& = (Su, S\bar u+Sf )_{L^2(\om)}
\end{align}
Since $S$  is a linear and continuous operator from $L^2(\om)$ to $L^2(\om)$, the computation of $\nabla f(\bar u)$ is straightforward, see for instance \cite{delr2015}. Therefore, for $u\in L^2(\om)$ we have that 
\begin{align}
	\nabla f(\bar u)u & = (Su, S\bar u+Sf -y_d )_{L^2(\om)} + \alpha (u,\bar u)_{L^2(\om)}\nonumber \\
					  & = (u, \alpha \bar u + S^*(\bar y-y_d ))_{L^2(\om)}. \label{eq:fonc_2}
\end{align}
Moreover, by introducing the adjoint state $\bar \phi \in H_0^1(\om)$ as the solution of the adjoint equation: 
	\begin{equation}
	\begin{array}{rll}
A^* \bar \phi &= \bar y - y_d,  & \text{in } \om, \\
	   \bar \phi & = 0, & \text{on } \ga, 	
\end{array}	\nonumber
	\end{equation}
we are able to write $\bar \phi = S^*( \bar y -y_d)$ ($S^*$ denoting the adjoint control-to--state operator).

On the other hand, it is well known \cite[Chapter 0.3.2]{ioffe1979}, that any $\zeta \in \partial\, \norm{\cdot}_{L^1(\om)}(\bar u)$ is characterized by
\begin{equation}\label{eq:fonc_3}
\zeta (x) 
\l\{
	\begin{array}{lll}
=1, & \text{ if } \bar u (x) >0, \\
=-1, & \text{ if } \bar u (x) <0, \\
\in  [-1,1],  & \text{ if } \bar u (x) =0.
\end{array} 
\r.
\end{equation}%\label{eq:sparse_mult}
In this way, from \eqref{eq:fonc_2} we obtain that $\nabla f(\bar u)= \bar \phi + \alpha \bar u$ which together with \eqref{eq:fonc_3} imply the existence of $\zeta \in \partial\, \norm{\cdot}_{L^1(\om)}(\bar u) \subset L^\infty(\om)$ which allows us to write \eqref{eq:fonc_1} in the form:
\begin{align}\label{eq:fonc_4}
	 \,\bar \phi + \alpha \bar u + \beta (\delta_\gamma \zeta - \bar w ) = 0.
	%& = (Su, S\bar u+Sf )_{L^2(\om)}
\end{align}
	
\end{proof}
{%\color{blue}
\begin{corollary}
	 If $\bar u$ is a solution of \eqref{e:OCP2} with the associated quantities $\bar y$, $\bar \phi$, $\zeta$ and $\bar w$ satisfying \eqref{eq:OPT_P'} then, the following relations are fulfilled
	 \begin{subequations} \label{eq:u_phi}
	 \begin{align}
	 	\bar u(x) &= 0  \Leftrightarrow |\bar \phi| \leq \beta \delta_\gamma, \text{ and } \label{eq:u_phi.1}\\
	 	\zeta &= \mathcal{P}_{[-1,1]} \l[ -\frac1{\beta\delta_\gamma}  \bar\phi \r]. \label{eq:u_phi.2}
	 \end{align}	
	 \end{subequations}
\end{corollary}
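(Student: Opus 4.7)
The plan is to argue pointwise almost everywhere in $\om$, using the piecewise structure of $\bar w$ from \eqref{eq:dH} and $\zeta$ from \eqref{eq:zeta} together with the gradient equation \eqref{eq:gradeq}. At each $x$ this splits naturally into three regimes: $\bar u(x)=0$, $0<|\bar u(x)|\le 1/\gamma$, and $|\bar u(x)|>1/\gamma$.

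For \eqref{eq:u_phi.1}, the forward implication is the easy half. If $\bar u(x)=0$, then \eqref{eq:dH} gives $\bar w(x)=0$ while \eqref{eq:zeta} only requires $\zeta(x)\in[-1,1]$; the gradient equation \eqref{eq:gradeq} collapses to $\zeta(x)=-\bar\phi(x)/(\beta\delta_\gamma)$, and the constraint $|\zeta(x)|\le 1$ is precisely $|\bar\phi(x)|\le \beta\delta_\gamma$. For the converse I would argue by contrapositive on the remaining regimes. If $0<|\bar u(x)|\le 1/\gamma$, then $\bar w(x)=0$ while $\zeta(x)=\sign\bar u(x)$, so \eqref{eq:gradeq} yields $|\bar\phi(x)|=\alpha|\bar u(x)|+\beta\delta_\gamma>\beta\delta_\gamma$. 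If instead $|\bar u(x)|>1/\gamma$, substituting the explicit form of $\bar w(x)$ produces a cancellation with the $\delta_\gamma\zeta(x)$ term, leaving $|\bar\phi(x)|=\alpha|\bar u(x)|+\tfrac{\beta}{p}\bigl(|\bar u(x)|+\tfrac{1-p}{\gamma p}\bigr)^{(1-p)/p}$.

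For \eqref{eq:u_phi.2}, with \eqref{eq:u_phi.1} in hand the projection formula follows by case distinction. On $\{\bar u=0\}$, the identity $\zeta=-\bar\phi/(\beta\delta_\gamma)$ with $|\zeta|\le 1$ is already $\mathcal{P}_{[-1,1]}[-\bar\phi/(\beta\delta_\gamma)]$. On $\{\bar u\neq 0\}$ one has $\zeta=\sign\bar u$, and the expressions for $\bar\phi$ derived above show that $-\bar\phi/(\beta\delta_\gamma)$ has the same sign as $\bar u$ with magnitude at least one, so the projection clips back to $\sign\bar u$.

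The main obstacle I foresee is the regime $|\bar u(x)|>1/\gamma$: since $\tfrac{1}{p}\bigl(s+\tfrac{1-p}{\gamma p}\bigr)^{(1-p)/p}<\delta_\gamma$ for $s>1/\gamma$ (also reflected in \eqref{eq:wbound}), proving $|\bar\phi(x)|>\beta\delta_\gamma$ reduces to the non-trivial inequality $\alpha|\bar u(x)|\ge \beta|\bar w(x)|$, which is not automatic from the gradient equation alone. I expect to close this by exploiting monotonicity of the derived expression in $|\bar u(x)|$ together with the bound on $|\bar w|$, or, if necessary, by appealing to local minimality of $\bar u$ beyond the bare critical-point condition \eqref{eq:dc_optimality} to rule out the troublesome branch.
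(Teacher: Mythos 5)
Your overall strategy---a pointwise case split into $\{\bar u=0\}$, $\{0<|\bar u|\le 1/\gamma\}$ and $\{|\bar u|>1/\gamma\}$ driven by the gradient equation \eqref{eq:gradeq}---is the same as the paper's, and your forward implication and your treatment of the regime $0<|\bar u(x)|\le 1/\gamma$ are correct and match it. The gap you flag in the remaining regime is genuine, and your proposal does not close it: for $|\bar u(x)|>1/\gamma$ the cancellation you describe leaves
\[
|\bar\phi(x)|=\alpha|\bar u(x)|+\frac{\beta}{p}\left(|\bar u(x)|+\frac{1}{\gamma}\frac{1-p}{p}\right)^{\frac{1-p}{p}},
\]
and since the second summand is strictly below $\beta\delta_\gamma$ there (this is \eqref{eq:wbound}), concluding $|\bar\phi(x)|>\beta\delta_\gamma$ requires precisely $\alpha|\bar u(x)|>\beta|\bar w(x)|$, which neither the gradient equation nor any monotonicity argument supplies. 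It can in fact fail numerically: with $p=2$, $\gamma=100$, $\alpha=1/4$, $\beta=7/10$ and $|\bar u(x)|=1$, the displayed quantity is about $0.6$ while $\beta\delta_\gamma\approx 4.95$, so the first-order system alone admits points with $\bar u(x)\ne 0$ and $|\bar\phi(x)|\le\beta\delta_\gamma$. Your instinct that one must invoke more than the bare critical-point condition \eqref{eq:dc_optimality}, or restrict the parameters, is right.

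You should also know that the paper's own proof disposes of this regime by asserting that $\bar u(x)>0$ forces $\bar w(x)\le 0$, whence $\bar\phi(x)=-\alpha\bar u(x)-\beta\delta_\gamma+\beta\bar w(x)<-\beta\delta_\gamma$. That sign is wrong: by Lemma \ref{l:j} the integrand $j$ is convex, nonnegative and vanishes on $[-1/\gamma,1/\gamma]$, so $j'\ge 0$ on $(0,\infty)$, and \eqref{eq:dj} gives $\bar w(x)=j'(\bar u(x))\in(0,\delta_\gamma)$ for $\bar u(x)>1/\gamma$. So the paper's argument breaks at exactly the step you identified; your diagnosis is more honest than the published proof. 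One further comparison concerning \eqref{eq:u_phi.2}: for the case $|\bar\phi(x)|>\beta\delta_\gamma$ the paper argues more economically than you do, using only the bound $|\delta_\gamma\zeta(x)-\bar w(x)|\le\delta_\gamma$ to read off the sign of $\alpha\bar u(x)$ from \eqref{eq:gradeq} and hence $\zeta(x)=\mp1$, with no lower bound on $|\bar\phi|$ over $\{\bar u\ne 0\}$ needed; your version of that step leans on the unproved magnitude estimate. The case $|\bar\phi(x)|\le\beta\delta_\gamma$ of \eqref{eq:u_phi.2} still requires the reverse implication of \eqref{eq:u_phi.1}, so it inherits the same gap in both treatments.
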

\begin{proof} Let $x\in \om$ be such that $\bar u(x)=0$. Taking into account  \eqref{eq:zeta} and  \eqref{eq:dH}, it follows that $\zeta(x) \in [-1,1]$ and that $\bar w(x) =0$. Then, the gradient equation \eqref{eq:gradeq} implies  $|\bar \phi (x)|  \leq \beta \delta_\gamma $. Reciprocally, let us suppose that $x\in \om$ is such that  $|\bar \phi (x)|  \leq \beta \delta_\gamma $ holds. Let us  assume first that $\bar u(x)>0$, then by Lemma \ref{l:diffH} it follows that $\bar w \leq 0$ and \eqref{eq:gradeq} implies that  $$\bar \phi (x)= -\alpha\bar u(x) - \delta_\gamma \beta  + \beta \bar w(x)<-\beta\delta_\gamma,$$
which is a contradiction. On the other hand,  by assuming $\bar u(x)<0$, an analogous chain of arguments also lead us to a similar contradiction. Hence, we conclude that $\bar u(x) =0$, which proves \eqref{eq:u_phi.1}.

The second relation follows from  \eqref{eq:zeta}, \eqref{eq:u_phi.1} and  \eqref{eq:gradeq}. Indeed, observe that:

\begin{equation}\label{eq:zeta-w}
		\ds
\beta\delta_\gamma \zeta(x) - \beta w (x) 
\left\{ \begin{array}{ll}
=\ds   \frac1p \l( | u(x)| + \frac{1}{\gamma}\frac{1-p}{ p } \r)^{\frac{1-p}{p}} \sign ( u (x)),& \hbox{if } | u(x)| > \frac{1}{\gamma},\vspace{2mm}\\
\in [-\beta\delta_\gamma,\beta\delta_\gamma],&   \text{ otherwise. }
\end{array} \right. \nonumber
	\end{equation} 
Therefore, we have that $|\beta\delta_\gamma \zeta(x) - \beta \bar w (x)| \leq \beta\delta_\gamma$. If we assume $\bar \phi(x) >{\beta \delta_\gamma}$ then $ \alpha \bar u(x) < - {\beta \delta_\gamma} -  \beta \delta_\gamma \zeta (x) + \beta \bar w (x)\leq 0$ which, in view of \eqref{eq:zeta}, implies that $\zeta (x) = -1 $. Similarly, if $\phi(x) <-{\beta \delta_\gamma}$ we have that $\zeta (x) = 1 $. This, together with \eqref{eq:zeta}, implies \eqref{eq:u_phi.2}.
%
%Finally, \eqref{eq:u_pos_neg} follows by noticing that if $\bar u(x)>0$, then
%$$ \bar u (x) = \frac{1}{\alpha} \l( -(\bar \phi (x) -\beta \bar w(x))-\beta \delta\r)$$
\end{proof} 
\begin{remark}\label{r:ctrlsupp_prop}
	The relations given by \eqref{eq:u_phi} have two important consequences: 
	\begin{enumerate}[(i)]
		\item \eqref{eq:u_phi.1} proves that sparsity of the solution \eqref{e:OCP2} is characterized by the adjoint state, solution of \eqref{eq:adj1}. Moreover, since $$ \frac12\norm{\bar y - y_d}^2_{L^2(\om)} \leq J_\gamma(\bar u) \leq J_\gamma(0) =\frac12\norm{ y_0 - y_d}^2_{L^2(\om)},$$ then, there exist positive constants $c$ and $C$, such that 
	       \begin{align}
	       	 \norm{\bar \phi}_{L^\infty(\om)} \leq c \norm{\bar y - y_d}_{L^2(\om)}  \leq C \norm{y_0 - y_d}_{L^2(\om)}:=m. \label{eq:phi_bound}
	       \end{align}
	Therefore, since $\gamma$ is fixed, a value of $\beta$ can be chosen such that $ \beta\delta_\gamma \geq m $. Then the adjoint state fulfills $\norm{\bar \phi}_{L^\infty(\om)} \leq \beta\delta_\gamma$ and by \eqref{eq:u_phi.1} the optimal control $\bar u$ must be zero. This complements the result from Lemma \ref{l:null_sol} regarding the existence of a parameter $\beta$ which enforces a null solution. 
	\item We also observe that equation \eqref{eq:phi_bound} implies that $\bar\phi$ is uniformly bounded for any $\gamma>0$. In view of \eqref{eq:phi_bound},  we claim that the set $\om_\gamma=\{ x: 0 < |u(x)| \leq 1/\gamma\}$ has zero Lebesgue measure for $\gamma$ large enough. This fact follows from the gradient equation \eqref{eq:gradeq} and the definition of $\bar w$. Indeed, if we suppose that $|\om_\gamma|>0$ this would imply that 	 $$ |\bar \phi(x)| >\beta\delta_\gamma -\frac{1}{\gamma}, \quad \text{ a.e. in }\om_\gamma, $$
	with the right--hand side of the last relation growing to infinity as $\gamma \arrow \infty$ because $\delta_\gamma \arrow \infty$. This is in contradiction with \eqref{eq:phi_bound}. Thus, $|\om_\gamma|=0$.
	
	Furthermore,  it turns out that a $\rho>0$ exists such that the set $\om_\rho:=\{ x: \frac{1}{\gamma} < |u(x)| \leq \frac{1}{\gamma}+\rho\}$ has zero Lebesgue measure for $\gamma$ large enough. Indeed, assume that  $|\om_\rho|>0$ for all $\rho>0$. Let us take some $\gamma_0$ such that $\frac{1}{\gamma}<\rho$ for all $\gamma>\gamma_0$. By using the  gradient equation \eqref{eq:gradeq} and the definition of $\bar w$, we have that 
	 \begin{align}
	 	 %\frac{\beta}{p} \l(\rho +\frac1p\frac{1}{\gamma }\r)^{(1-p)/p}\leq 
	 	 \frac{1}{\gamma} + \frac{\beta}{p} \l(\rho +\frac1p\frac{1}{\gamma }\r)^{(1-p)/p}\leq \l|\alpha\bar  u(x) + \frac{\beta}{p}  \l( | \bar u(x)| + \frac{1}{\gamma}\frac{1-p}{ p } \r)^{\frac{1-p}{p}}\sign{(\bar u(x))}\r|  = |\bar \phi(x)|,\nonumber
	 \end{align} 
	almost everywhere in $\om_\gamma$. Again, a contradiction is obtained by noticing that the lower bound of the last relation is unbounded if such $\rho$ does not exist. Therefore, in view of  \eqref{eq:phi_bound}, a $\rho>0$ must exist such that $|\om_\rho|=0$. In other words, if $\bar u(x)\not=0$, then  $|\bar u(x)|>\rho>0$ holds almost everywhere for some $\rho >0$ and for $\gamma$ sufficiently large.
	   \end{enumerate}
\end{remark}
}

An important question regarding the regularized problem \eqref{e:OCP2} is about the convergence of the solutions of \eqref{e:OCP2} to a solution of the original problem \eqref{e:OCP} when $\gamma \arrow \infty$. We address this question in the following results. 
{%\color{blue}
\begin{theorem}\label{t:opsys_u*}
	 There exists a sequence  $(\bar u_\gamma )_{\gamma>0}$ of solutions for problem \eqref{e:OCP2} weakly converging to $u^*$ in $L^2(\om)$. Moreover, there exist $\phi^* \in H_0^1(\om)$, $y^* \in H_0^1$ and $\xi^*\in L^2(\om)$ satsifying the system:
\begin{subequations}
\label{eq:OPT*}
\begin{align}
& \begin{array}{rll}
A y^* &=  u^* +f,  & \text{in } \om, \\
	   y^*& = 0, & \text{on } \ga, 	
\end{array}\label{eq:state*}\\
%& 
& \begin{array}{rll}
A \phi^* &=  y^* - y_d,  & \text{in } \om, \\
	   \bar \phi & = 0, & \text{on } \ga, 	
\end{array} \label{eq:adj*}\\
%
%&
& \alpha  u^*(x) +  \phi^*(x)+ \xi^*(x) = 0,	\label{eq:gradec_u*}\\
%&
& \;\text{ for almost all } x \in \om. \nonumber
\end{align}
\end{subequations}
In addition, there exists a $\rho >0$ such that  
\begin{equation}\label{eq:supp_u*}
|u^*(x)|\geq \rho, \quad \text{ for almost all } x \in \om \backslash \{x \in \om: u^*(x) =0 \}.	
\end{equation}
\end{theorem}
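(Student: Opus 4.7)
The plan is to extract a converging subsequence from the family $(\bar u_\gamma)$ of solutions of \eqref{e:OCP2}, pass to the limit in the optimality system \eqref{eq:OPT_P'} to obtain \eqref{eq:OPT*}, and then transfer to $u^*$ the uniform support property of Remark \ref{r:ctrlsupp_prop}(ii). I proceed in four stages.

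First, for each sufficiently large $\gamma$, Theorems \ref{t:exist} and \ref{t:fonc} supply a solution $\bar u_\gamma \in U_{ad}$ with associated $\bar y_\gamma$, $\bar \phi_\gamma$, $\zeta_\gamma$ and $\bar w_\gamma$ satisfying \eqref{eq:OPT_P'}. Comparing $J_\gamma(\bar u_\gamma) \le J_\gamma(0) = \tfrac{1}{2}\|Sf - y_d\|_{L^2(\om)}^2$ bounds $\|\bar u_\gamma\|_{L^2(\om)}$ uniformly in $\gamma$; the definition of $U_{ad}$ (elliptic smoothing of an $L^2$--source of norm bounded by $M$) furnishes in addition a uniform $H^1(\om)$--bound on $(\bar u_\gamma)$. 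Via Rellich's theorem I extract, along a non-relabeled subsequence, $\bar u_\gamma \warrow u^*$ weakly in $H^1(\om)$, strongly in $L^2(\om)$, and pointwise a.e.\ in $\om$. Continuity of $S$ then gives $\bar y_\gamma \to y^* := S(u^* + f)$ strongly in $L^2(\om)$, which is \eqref{eq:state*}; continuity of $S^*$ yields $\bar \phi_\gamma \to \phi^* := S^*(y^* - y_d)$ in $H_0^1(\om)$, which is \eqref{eq:adj*}.

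Next, to pass to the limit in the gradient equation I define $\xi_\gamma := \beta(\delta_\gamma \zeta_\gamma - \bar w_\gamma)$. By \eqref{eq:gradeq} this equals $-\bar\phi_\gamma - \alpha \bar u_\gamma$ and is therefore uniformly bounded in $L^2(\om)$ by the previous stage. Extracting a further weakly convergent subsequence $\xi_\gamma \warrow \xi^*$ in $L^2(\om)$ and passing to the limit in \eqref{eq:gradeq} produces $\alpha u^* + \phi^* + \xi^* = 0$, i.e.\ \eqref{eq:gradec_u*}.

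The main obstacle, I expect, is the uniform support lower bound \eqref{eq:supp_u*}. Remark \ref{r:ctrlsupp_prop}(ii) produces, for each sufficiently large $\gamma$, a $\rho(\gamma)>0$ with $|\bar u_\gamma(x)| \ge \rho(\gamma)$ a.e.\ on $\{\bar u_\gamma \neq 0\}$; one must verify that $\rho$ can be chosen independently of $\gamma$. Revisiting the contradiction estimate of that remark, any $\rho$ that fails must satisfy
\[
\tfrac{1}{\gamma} + \tfrac{\beta}{p}\l(\rho + \tfrac{1}{p\gamma}\r)^{(1-p)/p} \le m,
\]
where $m$ is the $\gamma$--free bound $\|\bar\phi_\gamma\|_{L^\infty(\om)} \le m$ from \eqref{eq:phi_bound}. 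Since $(1-p)/p<0$, this forces the admissible range of $\rho$ to stabilize, as $\gamma\to\infty$, around the $\gamma$--free value $(\beta/(pm))^{p/(p-1)}$, and any $\rho_0$ strictly below this limit works uniformly for all large $\gamma$. Combined with the a.e.\ convergence $\bar u_\gamma(x) \to u^*(x)$ from the first stage, the estimate $|\bar u_\gamma(x)| \ge \rho_0$ on the support of $\bar u_\gamma$ transfers to $u^*$, yielding \eqref{eq:supp_u*}.
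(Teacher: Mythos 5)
Your proof is correct and follows the same skeleton as the paper's: uniform $L^2$--boundedness from $J_\gamma(\bar u_\gamma)\le J_\gamma(0)$, extraction of a weak limit $u^*$, the definition $\xi_\gamma:=\beta(\delta_\gamma\zeta_\gamma-\bar w_\gamma)$ bounded via the gradient equation \eqref{eq:gradeq}, passage to the limit in the optimality system, and an appeal to Remark \ref{r:ctrlsupp_prop}(ii) for \eqref{eq:supp_u*}. The genuine divergence is in the last step. The paper works only with \emph{weak} $L^2$--convergence of $\bar u_\gamma$ (consistent with the theorem statement) and therefore transfers the support bound by a two-part argument: weak lower semicontinuity of $u\mapsto\int_{\om_\rho}u^2$ to show $u^*=0$ where the $\bar u_\gamma$ vanish, and a Lebesgue-point computation to show $|u^*|\ge\rho$ where $|\bar u_\gamma|>\rho$. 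You instead upgrade to strong $L^2$ and a.e.\ convergence by invoking the uniform $H^1$--bound hidden in the definition of $U_{ad}$ from Theorem \ref{t:exist} together with Rellich's theorem, after which the transfer of the lower bound is immediate and, frankly, cleaner; the cost is that you import a hypothesis that the paper only activates later, in Theorem \ref{t:consistency} (where it asserts strong convergence "arguing as in Theorem \ref{t:exist}"), so your version of Theorem \ref{t:opsys_u*} is slightly less general as stated. A second point in your favour: you make explicit that the $\rho$ of Remark \ref{r:ctrlsupp_prop}(ii) can be chosen independently of $\gamma$ — namely any $\rho_0<(\beta/(pm))^{p/(p-1)}$ with $m$ the $\gamma$--free bound of \eqref{eq:phi_bound} — a uniformity the paper uses tacitly but never verifies. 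Both computations check out.
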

\begin{proof}
	Here, we make explicit the fact that the quantities associated to the solution  of \eqref{e:OCP2} given in the system \eqref{eq:OPT_P'} depend on the regularization parameter $\gamma$. Therefore, the optimal control will be denoted by $\bar u_\gamma$, and the its associated quantities satisfying the optimality system \eqref{eq:OPT_P'} will be denoted by $\bar \phi_\gamma$, $\bar y_\gamma$, $\zeta_\gamma$ and $\bar w_\gamma$ respectively. 

We begin by noticing that the sequence $(\bar u_\gamma)_{\gamma>0}$ is bounded in $L^2(\om)$. Indeed,  since $y_0=S f$,  the optimality of $\bar u_\gamma$ for \eqref{e:OCP2} results in
\begin{align}
	\frac{\alpha}{2}\norm{\bar u_\gamma}^2_{L^2(\om)}\leq J_\gamma(\bar u_\gamma) & \leq J_\gamma (0) = \frac12 \norm{y_0- y_d}^2_{L^2(\om)}, \nonumber
\end{align}
	 which implies the boundedness of $(\bar u_\gamma)_{\gamma>0}$ in $L^2(\om)$ for $\alpha>0$.

As usual, reflexivity of $L^2(\om)$ allows us to extract a weakly convergent subsequence, denoted again by  $(\bar u_\gamma)_{\gamma>0}$ with limit $u^* \in L^2(\om)$.  Furthermore, the sequence $(\xi_\gamma)_{\gamma>0}$, defined by $\xi_\gamma:= \beta \delta_\gamma \zeta_\gamma -\beta \bar w_\gamma$, is also bounded in $L^2(\om)$ in view of equation \eqref{eq:gradeq}. Let  $\xi^* \in L^2(\om)$ be the weak limit of $(\xi_\gamma)_{\gamma>0}$ (after extracting a convergent subsequence). We denote by $y^*$ and by $\phi^*$ the corresponding solutions of equations $Ay = f + u^*$ in $H_0^1(\om)$ and  $A\phi = y^*-y_d$ in $H_0^1(\om)$, respectively. Thus, we will refer to $y^*$ and by $\phi^*$  as  the state and the adjoint state in $H^1_0(\om)$ associated to $u^*$. Notice that by the compact embedding $H^1_0(\om) \hookrightarrow \hookrightarrow L^2(\om)$ we have strong convergence of $\bar y_\gamma \arrow y^*$ and  $\bar \phi_\gamma \arrow \phi^*$ in $L^2(\om)$.
  Therefore, taking $\gamma \arrow \infty$ in equation \eqref{eq:gradeq}, we have
\begin{equation*}
	\lim_{\gamma \arrow \infty} (\bar \phi_\gamma  +\alpha \bar u_\gamma  + \beta \, (\delta_\gamma \, \zeta_\gamma -\bar w_\gamma), v )  = ( \alpha u^* +\phi^* + \xi^*,v) = 0, \quad \forall v\in L^2(\om),
\end{equation*}
which proves \eqref{eq:OPT*}.

Let us verify property \eqref{eq:supp_u*}. %we consider the set $\om^*_{\rho} := \{x \in \om : 0< u^*(x) \leq\rho \} $. Le us assume that this set has positive measure, i.e. $|\om^*_{\rho}|>0$. Since $\bar u_\gamma \rightharpoonup u^*$ in $L^2(\om)$.
First, notice that if $\om_\rho$ denotes the subset of $\om$ where $|\bar u_\gamma (x)|\leq \rho$ then, in view of Remark \ref{r:ctrlsupp_prop}, it follows that $\bar u_\gamma(x)=0$ a.e. in $\om_\rho$ for $\gamma$ sufficiently large. Then, by a weak lower semicontinuity argument we have that $\int_{\om_\rho} (u^*(x))^2 dx\leq \liminf_{\gamma \arrow 0} \int_{\om_\rho}(\bar u_\gamma(x))^2 dx =0$ which implies that $u^*(x) = 0$ a.e. in $\om_\rho$.

On the other hand, based again on Remark \ref{r:ctrlsupp_prop} (ii) we may chose $\gamma$ sufficiently large and a Lebesgue point $x\in \om$ of $\bar u_\gamma$, such that $\rho < \bar u_\gamma (x)$. Therefore, 
\begin{equation}\label{u*(x).1}
\rho < \bar u_\gamma(x) = \lim_{r\arrow 0}\frac{1}{|B(x,r)|} \int_{B(x,r)} \bar u_\gamma (y) dy. 	
\end{equation}
Moreover, since $\bar u_\gamma \rightharpoonup u^*$ in $L^2(\om)$ we also have that $  \int_{B(x,r)} \bar u_\gamma (y) dy \arrow   \int_{B(x,r)} u^* (y) dy$ as $\gamma \arrow \infty$, for all $r \arrow 0$. Thus,  by using \eqref{u*(x).1}, for $\varepsilon>0$ there exists $\gamma_0$ such that for $\gamma>\gamma_0$ the following estimate holds: 
\begin{align}
\rho &< \bar u_\gamma(x) \leq  \lim_{r\arrow 0}\frac{1}{|B(x,r)|} \l|\int_{B(x,0)} \bar u_\gamma (y) - u^*(y)\, dy \r|+  \lim_{r\arrow 0}\frac{1}{|B(x,r)|} \int_{B(x,r)} u^*(y)dy \nonumber \\
& <   \varepsilon +  \lim_{r\arrow 0}\frac{1}{|B(x,r)|} \int_{B(x,0)} u^*(x)dy = \varepsilon + u^*(x).
 \end{align}
 %
 %it follows that $\int_{\om_\rho^*} u^*(x) dx =  \lim_{\gamma \arrow \infty } \int_{\om_\rho^*} \bar u_\gamma (x) dx$. Therefore, we have that 
%
%\begin{align}
%\rho |\om_\rho^*|>\int_{\om^*_{\rho}}	u^*(x) \, dx & = \lim_{\gamma \arrow 0} \int_{\om^*_{\rho}} \bar u_\gamma^+(x) - \bar u_\gamma^-(x)  \, dx.\nonumber
%
	% (\beta\delta\zeta_\gamma(x)-\beta \bar w_\gamma (x)) 
%\end{align}
%implying $u^*(x) = 0$.
%By Remark \ref{r:ctrlsupp_prop}, we know that $|\bar u_\gamma(x)|>\rho$ for almost all $x\in\om^*_\rho$. Therefore, the relation:
%$$ 0<|u^*(x)|\leq \rho \leq \liminf_{\gamma \arrow 0} |\bar u_\gamma (x)|,$$
%is satisfied in almost all $x\in\om^*_\rho$, which in turn implies that $\int_{\om^*_{\rho}}|u^*(x)| dx = 0$, see for instance \cite[Theorem 26, pg.28-II]{dellmeyer2011}. This, however,  contradicts the definition of $\om_\rho^*$. Thus,  $|\om_\rho^*|=0$, and  $|u^*(x)|>\rho$ holds for almost all $x\in\om \backslash \{x\in \om:  u^*(x) =0 \}$. 
Finally, taking $\varepsilon\arrow 0$ then $\rho \leq  \bar u^*(x)$. Analogously, we conclude that if $x\in\om$ is such that $\bar u_\gamma(x) <-\rho$ then $u^*(x) \leq -\rho$. Hence, property \eqref{eq:supp_u*} follows.
\end{proof}
}

{%\color{blue}
\begin{theorem}\label{t:consistency}
 Let $(\bar u_\gamma )_{\gamma>0}$  a sequence of solutions of problem \eqref{e:OCP2}. Suppose that assumptions of Theorem \ref{t:exist} hold. There exists a subsequence $(\gamma_n)_{n\in \mathbb{N}}$ with $\gamma_n \arrow \infty$ as $n \arrow \infty$ and a subsequence $(\bar u_{n})_{n\in \mathbb{N}} \subset (\bar u_\gamma )_{\gamma>0}$ converging strongly in $L^2(\om)$ with $\bar u_n=\bar u_{\gamma_n}$  with limit $u^*$ in $U_{ad}$. Then, $u^*$ is a solution for problem \eqref{e:OCP} and the following convergence property is fulfilled:
  \begin{equation}
  \lim_{n \arrow \infty} J_{\gamma_n}(\bar u_n) = J(u^*) = \inf (P). %\quad \text{and }  \lim_{n \arrow \infty} \norm{\bar u_n - u^*}_{L^2(\om)} =0.	
  \nonumber
 \end{equation}   	
 \end{theorem}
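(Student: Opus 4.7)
The plan is to combine two ingredients: the weak $L^2(\om)$ convergence of $(\bar u_\gamma)$ already produced by Theorem \ref{t:opsys_u*}, upgraded to strong $L^2$ convergence via the compactness built into $U_{ad}$; together with the uniform approximation $J_\gamma \arrow J$ from Lemma \ref{l:uniconv_nonconvex}, which will let us pass to the limit in the optimality inequality $J_{\gamma_n}(\bar u_n) \leq J_{\gamma_n}(u)$ for arbitrary $u \in U_{ad}$.

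The first step is the strong convergence. Membership of $\bar u_\gamma$ in $U_{ad}$ furnishes $v_\gamma \in \bar B(0,M) \subset L^2(\om)$ with $-\Delta \bar u_\gamma + \tfrac{1}{\varepsilon}\bar u_\gamma = \tfrac{1}{\varepsilon} v_\gamma$. Since $(v_\gamma)$ is bounded in $L^2(\om)$, standard elliptic regularity for this Helmholtz-type equation yields a uniform bound on $(\bar u_\gamma)$ in $H_0^1(\om)$, and the compact embedding $H_0^1(\om) \hookrightarrow\hookrightarrow L^2(\om)$ produces a subsequence $\bar u_n := \bar u_{\gamma_n} \arrow u^*$ strongly in $L^2(\om)$. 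Passing to the weak limit in the defining equation (using weak $L^2$-compactness of $(v_\gamma)$ and lower semicontinuity of the norm) shows $u^* \in U_{ad}$.

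Next, the limit in $J_{\gamma_n}(\bar u_n)$ is handled through the splitting
\begin{equation*}
|J_{\gamma_n}(\bar u_n) - J(u^*)| \leq |J_{\gamma_n}(\bar u_n) - J(\bar u_n)| + |J(\bar u_n) - J(u^*)|.
\end{equation*}
The first summand is uniformly $O(\gamma_n^{-1/p})$ by Lemma \ref{l:uniconv_nonconvex}. The second summand tends to zero because each piece of $J$ is continuous along the strongly $L^2$-convergent sequence $\bar u_n \arrow u^*$: the quadratic terms by continuity of $S : L^2(\om) \arrow L^2(\om)$ and of the $L^2$-norm, while $\fsparse_p$ is continuous under $L^1(\om)$-convergence by the Ito--Kunisch property recalled just before Lemma \ref{l:conv1}, combined with the embedding $L^2(\om) \hookrightarrow L^1(\om)$ on the bounded domain. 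Therefore $J_{\gamma_n}(\bar u_n) \arrow J(u^*)$. Independently, Lemma \ref{l:uniconv_nonconvex} gives $J_{\gamma_n}(u) \arrow J(u)$ for every fixed $u \in U_{ad}$. Combining these facts with the minimality $J_{\gamma_n}(\bar u_n) \leq J_{\gamma_n}(u)$ yields $J(u^*) \leq J(u)$ for all $u \in U_{ad}$, so $u^*$ solves \eqref{e:OCP} and $\lim_n J_{\gamma_n}(\bar u_n) = J(u^*) = \inf(P)$.

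The main delicate step will be the continuity of $\fsparse_p$ along the convergent sequence, since this functional is neither convex nor differentiable; the argument leans on the Ito--Kunisch $L^1$-continuity property and on the boundedness of $\om$ to pass from the available $L^2$ convergence to $L^1$ convergence. A secondary concern is verifying that the regularity encoded in $U_{ad}$ yields $H^1$-bounds uniformly in $\gamma$, which is what triggers the compact embedding used to strengthen the convergence of $(\bar u_n)$ from weak to strong; this is standard elliptic theory but must be stated explicitly to close the argument.
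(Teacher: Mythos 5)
Your proposal is correct and follows essentially the same route as the paper: strong $L^2(\om)$ convergence of $(\bar u_{\gamma_n})$ extracted from the $H_0^1$-compactness encoded in $U_{ad}$ (arguing as in Theorem \ref{t:exist}), followed by passage to the limit in the optimality inequality $J_{\gamma_n}(\bar u_n)\leq J_{\gamma_n}(u)$ using Lemma \ref{l:uniconv_nonconvex} together with the $L^1$-continuity of $\fsparse_p$. Your triangle-inequality splitting of $|J_{\gamma_n}(\bar u_n)-J(u^*)|$ is simply a cleaner packaging of the paper's two one-sided estimates \eqref{eq:optim_1} and \eqref{eq:cost.1}, and the only point of ambiguity --- whether the final minimality is asserted over all of $L^2(\om)$ or only over $U_{ad}$ --- is one you inherit from, and in fact resolve more carefully than, the paper itself.
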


\begin{proof}
 By Theorem \ref{t:opsys_u*} we consider $u^* \in L^2(\om)$, the weak limit of $(\bar u_n)_{n\in \mathbb{N}}$ (after extraction of a subsequence) which satisfies \eqref{eq:OPT*} and \eqref{eq:supp_u*}. Arguing as in Theorem \ref{t:exist}, we have that $u_n$ converges strongly in $L^2(\om)$. Thus, the optimality of   $\bar u_n$ implies that $J_\gamma(\bar u_n)  \leq J_\gamma (u)$ for any $u\in L^2(\om)$ and taking into account \eqref{eq:hub1}, it follows that 
\begin{align}
	J_{\gamma_n}(\bar u_n)  &\leq J_{\gamma_n} (u) \nonumber\\
							 & = ~\frac{1}{2}\| y-y_d \|^2_{L^2(\om)}+\frac{\alpha}{2}\|u\|^2_{L^2(\om)}+\beta \fsparse_{p,{\gamma}_n}(u) \nonumber \\
							 & \leq ~\frac{1}{2}\| y-y_d \|^2_{L^2(\om)}+\frac{\alpha}{2}\|u\|^2_{L^2(\om)}+\beta \fsparse(u) = J(u).  \label{eq:optim_0} 
\end{align}
  In particular, Lemma \ref{l:conv1} this implies that  
\begin{equation}
\liminf_{n\arrow 0} J_{\gamma_n}(\bar u_n)	= \lim_{n\arrow 0} J_{\gamma_n}(\bar u_n) \leq J(u^*).\label{eq:optim_1}
\end{equation}
Now, we argue the reverse inequality. We know that the sequence $(v_k)_{k\in \mathbb{N}}$ strongly converges to $u^*$ in $L^2(\om)$. Let us denote by $\bar y_n$ and $\bar\phi_n$ the state and adjoint state associated to $\bar u_n$, respectively, and by $y_{u_k}$ the state associated to $u_k$. By continuity of the quadratic terms and Lemma \ref{l:conv1}, we have that
\begin{align}
J_{\gamma_n}(u^*) & = ~\frac{1}{2}\| y^*-y_d \|^2_{L^2(\om)} + \frac{\alpha}{2} \norm{u^*}_{L^2(\om)}^2 +\fsparse_{p,{\gamma}_n} (u^*) \nonumber\\
	& =\lim_{k \arrow \infty} \sum_{n=k}^{N(n)} \frac{1}{2}\|  y_{n}-y_d \|^2_{L^2(\om)} + \frac{\alpha}{2} \norm{\bar u_n}_{L^2(\om)}^2 +\fsparse_{p,{\gamma_n}} (u_n) \nonumber\\
	&= \liminf_{n \arrow \infty} J_\gamma(\bar u_n). \label{eq:cost.1}
	\end{align}
Then, by Lemma \ref{l:uniconv_nonconvex}
 and \eqref{eq:cost.1} we see that 
$$J(u^*)\leq J_\gamma (u^*) \leq \liminf_{\gamma \arrow \infty}J_{\gamma_n} (\bar u_n). $$ This, together with \eqref{eq:optim_1} proves that $J(u^*) = \lim_{n \arrow 0} J_{\gamma_n}(\bar u_n)$, and from \eqref{eq:optim_0}
 we conclude that $u^*$ is  a minimum for problem \eqref{e:OCP}.
\end{proof}
}
%\begin{remark}
%	Here, we have proved that the weak limit of a subsequence of solutions of \ref{e:OCP2} is a solution of problem \ref{e:OCP} in $L^2(\om)$. Thus, Theorem \ref{t:existP} is proved.  In case we have a penalization term in the $H_0^1(\om)$--norm as in \cite{itoku2014}, it follows by the compact embedding of  $H_0^1(\om)$ into $L^2(\om)$ we can extract a subsequence that converges strongly in $L^2(\om)$ and by the continuity of $\fsparse_{p,\gamma}$ we obtain the this result in this with strong convergence in $L^2(\om)$. 
 
%\end{remark}

\subsection{First--order necessary conditions with box--constraints} Since box--constraints are  important in applications, we give a further discussion when they are included in the optimal control problem \eqref{e:OCP}. Let us consider the set of feasible controls given by:
\begin{equation}\label{eq:U_ad}
	U_{ad}=\{u\in L^2(\om): u_a(x) \leq u(x) \leq u_b(x), \, \text{a.a. } x\in \om \},
\end{equation}  
where $u_a$ and $u_b$ are given functions in $L^\infty(\om)$ satisfying $u_a(x) < 0 < u_b(x)$ a.a. $x\in \om$. A similar analysis of existence of solutions and approximation of the regularized problems can be done with a few modifications of the associated results for the unconstrained case. 

The control constrained optimal control problem reads:
\begin{equation}
\tag{$P_C$} \label{e:OCP_c}
\begin{cases}
\displaystyle\min_{(y,u)} ~\frac{1}{2}\| y-y_d \|^2_{L^2(\om)}+\frac{\alpha}{2}\|u\|^2_{L^2(\om)}+\beta \fsparse_p(u)\\
\hbox{ subject to: }\\
u \in U_{ad} \qquad\text{  and}
\hspace{20pt}\begin{array}{rll}
A y=&u + f, &\hbox{in  } \om, \\
y=&0, &\hbox{on  }  \Gamma.
\end{array}
\end{cases}
\end{equation}

\begin{remark}
It follows by  definition \eqref{eq:U_ad} that $U_{ad}\subset B_\infty(0,M)$ with $M=\max\{\norm{u_a}_{L^\infty(\om)},\allowbreak\norm{u_b}_{L^\infty(\om)}\}.$ Therefore, according to Lemma \ref{l:null_sol} if $\beta>\beta_0= M^{\frac{p-1}{p}}\, \norm{S^{*}(Sf - y_d)}_{L^\infty(\om)}$ then $\bar u =0$ is solution of \eqref{e:OCP_c}.
\end{remark}

Analogous to the unconstrained optimal control problem \eqref{e:OCP'}, after introducing the control--to--state operator $S$ and replacing $\fsparse_p$ by $\fsparse_{p,\gamma}$, we introduce the regularized control constrained problem:

\begin{equation}
\tag{${P_{C}}_{\gamma}$} \label{e:OCP'_c}
\begin{cases}
\displaystyle\min_{(y,u)\in H_0^1(\om) \times L^2(\om)} ~\frac{1}{2}\| y-y_d \|^2_{L^2(\om)}+\frac{\alpha}{2}\|u\|^2_{L^2(\om)}+\beta \fsparse_{p,\gamma}(u)\\
\hbox{ subject to: }\\
u \in U_{ad} \qquad\text{  and}
\hspace{20pt}\begin{array}{rll}
A y=&u + f, &\hbox{in  } \om, \\
y=&0, &\hbox{on  }  \Gamma.
\end{array}
\end{cases}
\end{equation}

In the same fashion as the unconstrained problem, we define a DC representation of the cost functional for the constrained problem \eqref{e:OCP_c} by including the indicator function ${I}_{U_{ad}}$ for the admissible control set: 

\begin{align}\label{eq:GH_c}
\begin{array}{ll}
&\begin{array}{lrll}
	G: & L^2(\om) & \arrow &\reals \\
	   & u		& \mapsto &  G(u) : = \frac12 \norm{Su+Sf-y_d}^2_{L^2(\om)}	 +\alpha \norm{u}^2_{L^2(\om)} + \beta \delta_\gamma \norm{u}_{L^1(\om)} + I_{U_{ad}}, %\fsparse_{\gamma,p}{(u)}
\end{array} \\
&\begin{array}{lrll}
	H: & L^2(\om) & \arrow &\reals \\
	   & u		& \mapsto &  H(u) : =  \beta \l( \delta_\gamma \norm{u}_{L^1(\om)} - \fsparse_{p,\gamma}(u) \r).%\fsparse_{\gamma,p}{(u)}
\end{array}
\end{array}
\end{align}
Thus, by similar arguments as in the unconstrained case and taking into account that $\partial I_{U_{ad}}(u)$ corresponds to the normal cone of $U_{ad}$ at $\bar u$, we can derive an analogous optimality system.

\begin{theorem}\label{t:fonc_c} Let $\bar u$ be a solution of \eqref{e:OCP'_c}. Then there exist $\bar y = S \bar u$ in $H_0^1(\om)$, %solution of \eqref{eq:state}, an 
an adjoint state $\bar\phi \in H_0^1(\om)$ and a multiplier $\zeta \in L^2(\om)$ and $\bar w$ given by \eqref{eq:dH} such that the following optimality system is satisfied :
\begin{subequations}
\begin{align}
& \begin{array}{rll}
A \bar y &= \bar u +f  & \text{in } \om, \\
	   \bar y & = 0 & \text{on } \ga, 	
\end{array}\label{eq:state1c}\\
%& 
& \begin{array}{rll}
A^* \bar \phi &= \bar y - y_d  & \text{in } \om, \\
	   \bar \phi & = 0 & \text{on } \ga, 	
\end{array} \label{eq:adj1c}\\
%
%&
&\langle \bar \phi +\alpha \bar u + \beta \, (\delta_\gamma \, \zeta -\bar w ) , u - \bar u \rangle \geq  0, \quad \forall u \in U_{ad}	\label{eq:vic}\\
%&
& \begin{array}{lll}
\zeta(x) &=1, & \text{ si } \bar u (x) >0, \\
\zeta(x) &=-1, & \text{ si } \bar u (x) <0, \\
|\zeta(x)| &\leq 1,  & \text{ si } \bar u (x) =0,
\end{array} \label{eq:zeta_mult}\\
& \;\text{ for almost all } x \in \om. \nonumber
\end{align}	
\end{subequations}
Moreover, there exist $\lambda_a$ and $\lambda_b$ in $L^2(\om)$ such that the last optimality system can be written as a KKT optimality system:
\begin{subequations}
\label{eq:OTP_P'c}
\begin{align}
& \begin{array}{rll}
A \bar y &= \bar u +f  & \text{in } \om, \\
	   \bar y & = 0 & \text{on } \ga, 	
\end{array}\label{eq:state1kkt}\\
%& 
& \begin{array}{rll}
A^* \bar \phi &= \bar y - y_d  & \text{in } \om, \\
	   \bar \phi & = 0 & \text{on } \ga, 	
\end{array} \label{eq:adj1kkt}\\
%
%&
& \bar \phi +\alpha \bar u + \beta \, (\delta_\gamma \, \zeta -\bar w ) + \lambda_b - \lambda_a =0	\label{eq:gradientkkt}\\
%&
& \begin{array}{ll}
	\lambda_a \geq 0, & \lambda_b \geq 0, \\
	\lambda_a (\bar u -u_a) =0, & \lambda_b (u_b-\bar u) =0,
\end{array}\\
& \begin{array}{lll}
\zeta(x) &=1 & \text{ si } \bar u (x) >0, \\
\zeta(x) &=-1 & \text{ si } \bar u (x) <0, \\
|\zeta(x)| &\leq 1  & \text{ si } \bar u (x) =0,
\end{array}
\end{align}
\end{subequations}
\end{theorem}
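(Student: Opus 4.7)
The plan is to mirror the proof of Theorem \ref{t:fonc} with the modification that $G$ now includes the indicator function $I_{U_{ad}}$. The function $H$ coincides with the one used in the unconstrained case, so Lemma \ref{l:diffH} still yields $\partial H(\bar u) = \{\beta \bar w\}$ with $\bar w$ given by \eqref{eq:dH}, and the DC critical point condition $\partial H(\bar u) \subset \partial G(\bar u)$ reduces to $\beta \bar w \in \partial G(\bar u)$.

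First I would apply the Moreau--Rockafellar subdifferential sum rule to decompose
\[
\partial G(\bar u) = \nabla F(\bar u) + \beta\delta_\gamma\, \partial\|\cdot\|_{L^1(\om)}(\bar u) + N_{U_{ad}}(\bar u),
\]
where $N_{U_{ad}}(\bar u) = \partial I_{U_{ad}}(\bar u)$ is the normal cone. The sum rule is applicable because $F$ and $\|\cdot\|_{L^1(\om)}$ are finite-valued and continuous on $L^2(\om)$ and, since $u_a(x)<0<u_b(x)$ a.e.\ in $\om$, the set $U_{ad}$ has nonempty interior in $L^2(\om)$ (e.g.\ it contains a neighborhood of the zero function). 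The computation of $\nabla F(\bar u) = \bar\phi + \alpha\bar u$ with $\bar\phi$ solving the adjoint equation \eqref{eq:adj1c}, as well as the characterization \eqref{eq:zeta_mult} of elements of $\partial\|\cdot\|_{L^1(\om)}(\bar u)$, are exactly as in Theorem \ref{t:fonc}.

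Combining these pieces yields the existence of $\zeta \in L^\infty(\om)$ satisfying \eqref{eq:zeta_mult} and $\eta \in N_{U_{ad}}(\bar u)$ such that
\[
\bar\phi + \alpha\bar u + \beta(\delta_\gamma \zeta - \bar w) + \eta = 0 \quad \text{in } L^2(\om).
\]
Testing against $u - \bar u$ for arbitrary $u\in U_{ad}$ and using the defining property $\langle \eta, u - \bar u \rangle \leq 0$ of the normal cone delivers the variational inequality \eqref{eq:vic} together with \eqref{eq:state1c}, \eqref{eq:adj1c} and \eqref{eq:zeta_mult}.

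To obtain the KKT reformulation, I would use the pointwise characterization of the normal cone of a box set: $\eta \in N_{U_{ad}}(\bar u)$ if and only if there exist $\lambda_a, \lambda_b \in L^2(\om)_{+}$ with $\eta = \lambda_b - \lambda_a$, $\lambda_a(\bar u - u_a)=0$ and $\lambda_b(u_b - \bar u)=0$ a.e.\ in $\om$. The multipliers are constructed explicitly from the active and inactive sets: if we denote $d(x) := -(\bar\phi(x) + \alpha \bar u(x) + \beta(\delta_\gamma \zeta(x) - \bar w(x)))$, the pointwise version of \eqref{eq:vic} forces $d(x) \leq 0$ on $\{\bar u = u_a\}$ and $d(x) \geq 0$ on $\{\bar u = u_b\}$ and $d(x)=0$ on $\{u_a < \bar u < u_b\}$, so that setting $\lambda_a := \max(0,-d)$ and $\lambda_b := \max(0,d)$ produces the desired complementarity pair and the gradient equation \eqref{eq:gradientkkt}.

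The one technical point requiring care is the applicability of the sum rule in the presence of $I_{U_{ad}}$; this is the only place where the indicator function interferes with the unconstrained argument, and it is handled by the interior-point qualification noted above. Once the decomposition of $\partial G(\bar u)$ is justified, everything else follows verbatim from Theorem \ref{t:fonc}, and the passage from the variational inequality to the KKT system is the standard pointwise argument for box-constrained problems.
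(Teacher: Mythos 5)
Your proposal follows essentially the same route as the paper: the paper's proof is a one-line reference back to Theorem \ref{t:fonc} plus the observation that $\bar w \in \nabla f(\bar u) + \beta\delta_\gamma\,\partial\norm{\cdot}_{L^1(\om)}(\bar u) + \partial I_{U_{ad}}(\bar u)$, and you simply fill in the details it leaves implicit (the sum rule, the normal-cone characterization, and the explicit construction of $\lambda_a,\lambda_b$, which the paper does not write out at all). One correction to your justification of the constraint qualification: $U_{ad}$ does \emph{not} have nonempty interior in $L^2(\om)$ --- an $L^2$-ball around $0$ contains functions exceeding any pointwise bound on sets of small measure, so no such ball lies in $U_{ad}$. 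The Moreau--Rockafellar sum rule still applies, but for the right reason: the summands $F$ and $\beta\delta_\gamma\norm{\cdot}_{L^1(\om)}$ are (finite and) continuous on all of $L^2(\om)$, in particular at some point of $\mathrm{dom}\, I_{U_{ad}} = U_{ad}$, which is the standard qualification; no interiority of $U_{ad}$ is needed. With that repair, your pointwise derivation of the multipliers ($\lambda_a=\max(0,-d)$, $\lambda_b=\max(0,d)$ with the sign analysis on the active and inactive sets) is correct and completes the KKT part cleanly.
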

\begin{proof}
This theorem is proved by following the arguments of the proof of Theorem \ref{t:fonc}, where variational inequality \eqref{eq:vic} follows by taking into consideration classical results on convex analysis and the fact that $\bar w \in \nabla f(\bar u) + \beta\delta_\gamma\, \partial \norm{\cdot}_{L^1(\om)}(\bar u) + \partial I_{U_{ad}} (\bar u) $. 
%Then, we deduce that   
%$\bar \phi +\alpha \bar u + \beta \, (\delta_\gamma \, \zeta -\bar w )$ belongs to the normal cone .
%by introducing the multipliers $\lambda_a=$ and $\lambda_b=$ 
\end{proof}

\vspace{4mm}
In addition, by the usual projection operator $\mathcal P_{U_{ad}}$ (see \cite[Lemma 1.11]{hinze2008}) on the admissible control set, the variational inequality \eqref{eq:vic} can be equivalently rewritten in equation form:
\begin{equation} \label{eq:PUad}
\bar u = \mathcal{P}_{U_{ad}}\l[ -\frac{1}{\alpha} \l( \bar \phi + \beta (\delta_\gamma \zeta-\bar w)\r)\r].
\end{equation}

\section{Numerical solution via the DC Algorithm (DCA)}\label{s:DCA}

In the former section we have derived necessary optimality conditions for problem \eqref{e:OCP2} and problem \eqref{e:OCP'_c}, essential to investigate the behavior of an optimal control. Moreover, these conditions are suitable for deriving numerical methods such as Semi-Smooth Newton method (SSN). 

By the nature of our problem we turn our attention to its numerical solution by adapting the DC algorithm. The application of the DC algorithm to our problem leads to a numerical scheme which relies on numerical methods for solving sparse $L^1$ optimal control problems, including SSN methods. Our method is completely determined by the formulation \eqref{e:OPT-DC} which is a suitable difference--of--convex functions representation of the original optimal control problem. We present the algorithm in a function space setting in the spirit of \cite{auchmuty89}.

The DC--Algorithm is based on the fact that: if $\bar u$ is the solution of the primal problem \eqref{e:OCP'} then  $\partial H (\bar u) \subset \partial G (\bar u)$ and conversely, if $u^*$ is the solution of the dual problem denoted by $({P'}^*)$ we have the inclusion $\partial G^* (u^*)\subset \partial H^* (u^*)$, where $H^*$ and $G^*$ correspond to the dual functions of $H$ and $G$ respectively. In \cite{auchmuty89}, an abstract framework for the DC algorithm in Banach spaces is presented. Although the functions $G$ and $H$ do not satisfy all assumptions in \cite{auchmuty89}, some of the results in  \cite{auchmuty89}  can be extended to our case with slight modifications. In particular,  if we define the function 
\begin{equation}
	L(u,w) = ( w, u )_{L^2(\om)} -G^*(w) - H(u),
\end{equation}
then, according to \cite{auchmuty83}, this is a \emph{Lagrangean of type I},  which we use to interpret optimality conditions for \eqref{e:OCP'} in terms of $L$. Indeed, if $\bar w \in \partial H(\bar u) \subset \partial G(\bar u)$, then we have that  $\bar u \in \partial G^*(\bar w)$. This is equivalent to the following condition:
\begin{subequations}\label{eq:saddle.1}
\begin{align}
	L(\bar u, w ) \geq L(\bar u, \bar w ), \\
	L( u, \bar w ) \geq L(\bar u, \bar w ),
\end{align}
\end{subequations}
for all $u$ and all $w$ in $L^2(\om)$. The pair $(\bar u, \bar w)$ is referred as $\partial$--critical point of $L$, see \cite{auchmuty89}.

This symmetry means that DC--Algorithm alternates in computing approximations of the solutions for the primal and the dual problems as follows:
\begin{subequations}\label{eq:inclusions}
\begin{align}
	\text{First chose: }& w_k \in \partial H(u_k), \\
	\text{then chose: }& u_k \in \partial G^*(w_k). 
\end{align}	
\end{subequations}
A more detailed discussion on the DC method can be found in \cite{dinh2014} and \cite{auchmuty89}. In particular, in \cite{auchmuty89} the authors study the convergence properties for the DC algorithm in abstract spaces that covers our case with small changes.
%todo referencia DCA

Let us give a precise meaning to the numerical problems generated by \eqref{eq:inclusions}. In view of the identity $\partial H ( u_k) = \{ w_k \}$, formula \eqref{eq:dH} implies that  $w_k$ is given by 
\begin{equation}\label{eq:w_k}
w_k = \begin{cases}
0, \hspace{7cm}\text{if }\quad  |u_k(x)| \leq \frac1\gamma, \\
\left[ \delta_\gamma - \frac{1}{p} \left(|u_k(x)| +\frac1\gamma\frac{1-p}{p} \right)^{\frac{1-p}{p}}\right] \sign ( u_k(x)), \quad\text{ otherwise.}
\end{cases}
\end{equation}

On the other hand, for a convex and lower semi--continuous function $g$, it follows that
$$ g(x) = \sup \{ \langle x,  y\rangle - g^*(y) \}. $$
Moreover,  according to Rockafellar \cite{rockafellar2015} the subgradients can be computed as: 

\begin{align}
\ds
	\partial G (y) =\text{argmax}_{w} \{ \langle y , w \rangle  - G^{*}(w) \}, \label{eq:subG} \\
	\partial G^* (w) =\text{argmax}_{z} \{ \langle w , z \rangle - G(z) \},     \label{eq:subG*}
\end{align}
therefore, $u_k$ can be obtained by solving the following optimal control problem
\begin{equation}\label{eq:L1-subproblem}
\min_{u_{k+1}} \frac12 \norm{S u_{k+1} + Sf - y_d}^2_{L^2(\om)} + \frac{\alpha}{2}\norm{u_{k+1}}^2_{L^2(\om)} + \delta_\gamma \beta \norm{u_{k+1}}_{L^1(\om)}	- \int_{\om} w_k u_{k+1} \,dx.
\end{equation}

In case of the presence of box--constraints on the control, our formulation yields a box--constrained $L^1$ optimal control subproblem

\begin{align}\label{eq:L1-subproblem-c}
\min_{u_{k+1} \in U_{ad}} &\frac12 \norm{S u_{k+1} - y_d}^2_{L^2(\om)} + \frac{\alpha}{2}\norm{u_{k+1}}^2_{L^2(\om)} + \delta_\gamma \beta \norm{u_{k+1}}_{L^1(\om)}	- \int_{\om} w_k u_{k+1} \,dx. % \\
%& \text{ subject to:}\nonumber\\
%& u_{k} \in U_{ad}. \nonumber
\end{align}

\begin{remark}
	By the form of the DC splitting we replace problem \eqref{eq:subG} by the direct computation of $w_k$ from formula \eqref{eq:w_k}. In addition, observe that problem \eqref{eq:L1-subproblem} is a convex $L^1$--sparse optimal control problem with penalization parameter $\delta_\gamma\beta$, for which it is known to have a unique solution for $\alpha>0$ c.f. \cite{stadler09}. The case of $\alpha=0$ with box--constraints is also possible. Moreover, this problem can be solved numerically in an efficient way. For example, it can be solved by semi--smooth Newton methods proposed in \cite{stadler09} or, it can be solved in the framework of sparse programming problems in finite dimensions after its discretization.  
\end{remark}

{%\color{blue}
In order to complete the presentation of our algorithm, we now turn our attention to the following as stopping criterion. Looking at the gradient equation \eqref{eq:gradeq} we could consider checking approximately that 
\begin{equation}\label{eq:stoppping}
	\zeta_k=\frac{1}{\beta\delta_\gamma}\l( w_k- \phi_k - \alpha  u_k  \r)    \in \partial\, \norm{\cdot}_{L^1(\om)}(u_k) ,
\end{equation}  
where $u_k$, $\phi_k$, $w_k$ represent the corresponding approximations of the optimal control, the adjoint state and the multipliers in the $k$--th iteration. This guarantees that the associated quantities satisfy the optimality system.  Other stopping criteria can be also used. For example,  condition \eqref{eq:saddle.1} can also be checked for stopping the algorithm.
}
%In practice, a less sensitive stopping criterion gave us better results. This consists in checking the quantity:
%\begin{equation}\label{eq:stoppping}
%	\norm{\zeta_{k+1} - \zeta_k} \leq tol,
%\end{equation} 
% where $tol$ is a prescribed tolerance. Notice that $\zeta_k$ contains information of the sign of the approximated solution $u_k$. Therefore, this condition is a analogous (but weaker) to the comparison of consecutive active sets, see for example \cite{delr2015}. Other stopping criteria can be considered as well, for example, a prescribed tolerance for the residual.

\begin{algorithm}[H]
\caption{DCA for problem \eqref{e:OCP2}}
\begin{algorithmic}[1]
\STATE Initialize  $u^0$.
\WHILE{\hbox{stoping criteria is false}}
%\STATE Compute $\nabla^2 H_\gamma (u^k)$ as in~\eqref{eq:04} .%$\partial^2 H_\gamma(x)=\dfrac{\gamma}{\max{\{1,\gamma|x|\}}}-\chi\cdot\dfrac{\gamma x q}{\left(\max\{1,\gamma|x|\}\right)^2 \max\{1,|q|\}}$
\STATE Compute $w_k$ given by \eqref{eq:w_k}
\STATE Compute $u_{k+1}$  by solving problem \eqref{eq:L1-subproblem} or \eqref{eq:L1-subproblem-c} in case of control constraints.
\STATE $k \gets k+1$.
\ENDWHILE
\end{algorithmic}\label{alg:DCA1}
\end{algorithm}

%Now we present a second algorithm based on the Semi--smooth Newton Method for solving problem \eqref{e:OCP2} which we will refer as DCSSN. This method is designed by the application of Semi--smooth Newton Methods (SNN) to optimality system \eqref{eq:OPT_P'} see \cite{delr2015} or \cite{itoku2008} for a complete presentation of the SNN method. From the optimality system \eqref{e:OCP2} we can write equation \eqref{eq:zeta_mult} as follows:
%\begin{equation}
%	\bar u - \max{(0, \bar u + c( \zeta - \beta\delta))} - \max{(0, \bar u + c( \zeta + \beta\delta))} =0.
%\end{equation}

%\begin{algorithm}[H]
%\caption{DCA for problem \eqref{e:OCP'_c}} 
%\begin{algorithmic}[1]
%\STATE Initialize  $u^0$.
%\WHILE{\hbox{stoping criteria is false}}
%%\STATE Compute $\nabla^2 H_\gamma (u^k)$ as in~\eqref{eq:04} .%$\partial^2 H_\gamma(x)=\dfrac{\gamma}{\max{\{1,\gamma|x|\}}}-\chi\cdot\dfrac{\gamma x q}{\left(\max\{1,\gamma|x|\}\right)^2 \max\{1,|q|\}}$
%\STATE Compute $w_k$ given by \eqref{eq:w_k}
%\STATE Compute $u_{k+1}$  by solving problem \eqref{eq:L1-subproblem-c}
%%\STATE
%\STATE $k \gets k+1$.
%\ENDWHILE
%\end{algorithmic} \label{alg:DCA2}
%\end{algorithm}

\subsection{Advantages and disadvantages of DC--Algorithm}

{%\color{blue} 
Algorithm \ref{alg:DCA1} is a first--order method, which provides a primal--dual updating procedure without a line--search step. %This is and important feature in optimal control problems since there is not an additional inner loop. %where the line--search step requires the evaluation of the cost function and its gradient in each iteration, requiring the computation of the state and adjoint equations, which usually are very expensive to solve numerically.
Although in the proposed DC method there is no need of a line--search procedure, the methods to solve the inner subproblem related with the computation of $\partial G^*$ are not PDE--free. In fact, the computational cost is concentrated in solving the subproblem, and the algorithm used to solve it might still require a line--search procedure. However, this is not an explicit feature of DCA.

 {%\color{blue} 
 In order to solve the $L^1$--norm subproblem \eqref{eq:L1-subproblem} one may apply different methods available in literature. See for example \cite{wright12} for a survey of methods for these type of problems. In particular, we might apply the numerical scheme developed in \cite{stadler09}.  %either superlinear methods, like the Semi--smooth Newton Method (SNN), for solving the subproblem or, descent second--methods both strategies with their own drawbacks and benefits.

As an alternative to the DC--algorithms discussed here, the optimality system \eqref{eq:optim_1} obtained from the DC representation of the problem can be used as a basis for the derivation of semi--smooth Newton schemes. Thus, we can take advantage of this optimality system in order to obtain superlinear methods based on the DC--approach, see \cite{itoku2008} for a complete review of such methods.  It is also worth taking into account that SSN schemes will require solving a coupled system involving the state $y$, the adjoint state $\phi$, and the multipliers $\zeta$, $\lambda_a$ and $\lambda_b$, resulting in a large system of equations which is usual in Newton methods. Solving this system might be computationally demanding because of its size, which depends on the discretization and the dimension of the domain for all the coupled variables. In the DC method we still have to solve PDEs but, in contrast to SSN, the systems involved are not coupled. Despite of this, the first order nature of DCA will demand more iterations for converging to an approximated solution.} %The possibility of using methods, which require only the computation of the state, to solve the subproblems  \eqref{eq:L1-subproblem} or \eqref{eq:L1-subproblem-c} decreases the computacional cost of solving linear systems. This is particularly useful when we deal with 3D problems, whose size escalate very quickly in fine meshes and require more memory resources. %However, firs order methods methods like \eqref{alg:DCA1} are slower and will require more iterations. 
We summarize the numerical properties of the algorithm in the following table.

\begin{table}[hbt]
\begin{center}
  \begin{tabular}{lll}

 Issues  & PDA &  DCA\\
    \hline   \hline
    Iterative subproblem    & Linear system & $\norm{\cdot}_{L^1}$ optimal control problem \\
 Linear systems   & Large, sparse &  Dense, small (using OESOM solver)\\
 					& $(y,u,\phi)$ & ($u$ only) \\
 Sparsity & $\approx 0$ &  $=0$\\
 Tuning parameters & 1			&  4   \\
 \hline
  \end{tabular}
  \caption{Numerical properties of PDA (proposed in \cite{itoku2014}) and DCA}
  \end{center}
\end{table}

Whereas  in the primal dual algorithm proposed in \cite{itoku2014} a large sparse linear system for $(y,u,\phi)$ needs to be solved in each iteration, the DC--Algorithm requires the solution of a sparse optimal control problem iteratively. In our setting we chose to use descent methods, intended specifically for $L^1$--sparse problems (\cite{dlrlm2017}). Note also that, in each iteration, OESOM solves a dense linear system depending only on the inactive components of the control variable, which can be decoupled from the active ones. By construction, the approximated Hessian is dense and its construction is based on the BFGS matrix.

 Using tailored methods for solving $L^1$--sparse problems has the advantage of recovering the sparse components of the solution, as discretization points with vanishing control. In contrast, PDA computes sparsity only approximately close to 0. See Figure \ref{fig:ex2_sparsity}.

We also observe that the DC--algorithm requires the tuning of more parameters, also depending on the method used to solve the inner subproblem. This is a drawback when compared with PDA, which requires choosing only one regularization parameter $\varepsilon$.  

Finally, we mention that both algorithms can be combined. For example, after obtaining a solution using PDA we can recover the sparse components by refining the solution as the input for DCA.
}

\section{Implementation aspects}

\subsection{Approximation} For simplicity, the approximation of problems \eqref{e:OCP} and \eqref{e:OCP_c} is done by the finite--difference scheme,  although other discretization methods might be applied as well. Uniform meshes are considered in the domain $\om$ with $N$ internal nodes. The associated mesh parameter is given by $h=\frac{1}{N+1}$. Then, the state equation \eqref{eq:elliptic} is solved numerically with the finite difference method while the approximation of the integrals is computed using the following mid--point rule:

\begin{align} \label{eq:quadrature}
\int_a^b\int_c^d u(x,y) dydx \approx  \frac{1}{4}h^2&{}\big\{u(a,c)+u(b,c)+u(a,d)+u(b,d)\\ &{}+2\sum_{i=1}^{n-2}u(x_i,c)+ 2\sum_{i=1}^{n-2}u(x_i,d) + 2\sum_{i=1}^{n-2}u(a,y_i) \nonumber \\
&{}+2\sum_{i=1}^{n-2}u(b,y_i)+4\sum_{i=1}^{n-2} \sum_{j=1}^{n-2}u(x_i,y_i)\big\}. \nonumber	
\end{align}
Using this approximation, and reshaping the matrix $(u(x_i,y_j))_{i,j=1,\ldots,N}$ as a vector $\mathbf u \in \reals^{N^2}$ the $L^1$--norm is approximated by 
\begin{equation}
	\norm{u}_1 \approx \sum_{i=1}^{N^2} c_i |\mathbf{u}_i|,
\end{equation} 
where the $c_i$'s are the corresponding coefficients given by \eqref{eq:quadrature}.
\subsection{Auxiliar $L^1$--sparse optimal control problems} DC--algorithm \ref{alg:DCA1} has a simple structure. However, the method requires to solve auxiliar $L^1$--norm optimal control subproblems \eqref{eq:L1-subproblem} (or \eqref{eq:L1-subproblem-c} in the constrained case). Clearly, the efficiency of the proposed algorithms strongly depends on the numerical methods applied for solving \eqref{eq:L1-subproblem} and \eqref{eq:L1-subproblem-c}. As mentioned earlier, the numerical solution of the $L^1$--norm optimal control problems can be done by semi--smooth Newton methods as in \cite{stadler09}. However, semi--smooth Newton methods do not guarantee a reduction in the cost function in each iteration. 
%

%Although our methodology was proposed for elliptic problems, it can be extended for different boundary conditions, for parabolic problems or  optimal control problems involving other equations, for which the semi--smooth system can be very large or not having an straightforward deduction. 
In the current numerical scheme,  the application of numerical methods for solving $L^1$--sparse problems is straightforward. Indeed, we only need to provide the cost function and the corresponding gradient which involves the computing of the adjoint state. The last one can be evaluated by means of the adjoint state \eqref{eq:adj1}. Several methods involve second order information of the smooth part of the cost, which can be considered using Hessians or its approximation by means of BFGS or LBFGS methods. In addition, approximated second order information of non differentiable term is calculated by the built--in enriched second order information constructed by the OESOM algorithm using weak derivatives of the $L^1$--norm, see \cite{dlrlm2017} for details.

\section{Numerical Experiments}

In order to investigate the numerical performance of the proposed DC--algorithm in Section \ref{s:DCA} we have implemented Algorithm \ref{alg:DCA1} using MATLAB. The associated sparse $L^1$ subproblem was solved using the OESOM algorithm \cite{dlrlm2017} by extending it to the box--constrained case with an additional projection step on the admissible control set. %The OESOM algorithm is a second--order method for solving $\ell_1$--norm penalized optimization problems which includes second order information hidden in the structure of the $\ell_1$--norm. therefore, is an efficient method to solve the $L^1$--norm subproblem from the DC--algorithm.

As illustrative examples, we consider the following tests defined on the unit square domain $\Omega = (0,1) \times (0,1)$. 

\begin{example}\label{ex:1} We consider problem \eqref{e:OCP} for $A=-\Delta$ and  $y_d = e^{-\cos (2 \pi x y)^2/0.1}$.

\subsection*{Performance of a single run} We first solve this example fixing the values of $\alpha=1/4$ and $\beta = 7/10$. Algorithm \ref{alg:DCA1} gives an approximated solution after 18 iterations stopping when the quantity: $-\frac{1}{\beta \delta_\gamma}(\alpha\bar u_k + \bar \phi_k - \beta \bar w_k)$ belongs to $\partial \norm{\cdot}_{L^1(\om)}(u_k)$. The table and graphics below show the performance and behavior of DCA. We observe in Figure \ref{fig:ex1_convergence1}, with logarithmic scale in the $x$ axis, the decreasing behavior of the objective function is more intensive in the first iterations. We also show the decreasing of the distance of consecutive approximated multipliers in the logarithmic scale in the $y$ axis. 
\begin{figure}[ht!]
\centering
\begin{subfigure}{0.45\textwidth}
\includegraphics{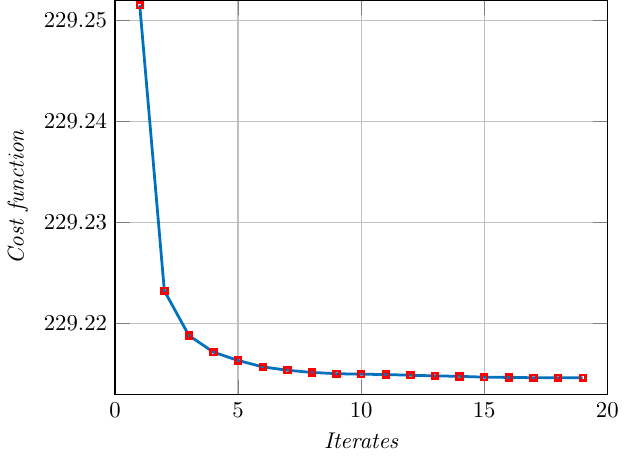}
\end{subfigure}
\begin{subfigure}{0.45\textwidth}
\includegraphics{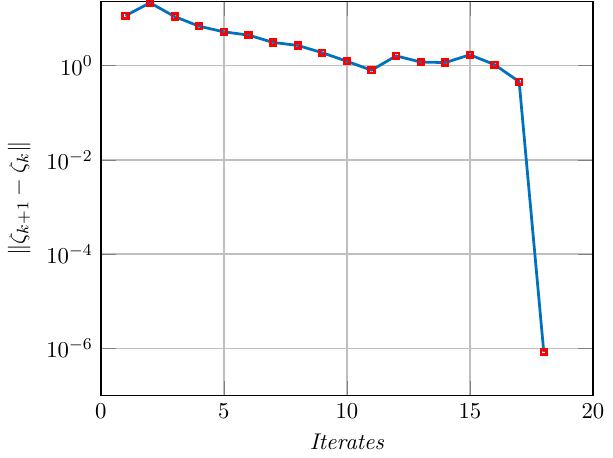}
\end{subfigure}
\caption{Cost function and size of consecutive values of $\zeta$ at $\beta=0.004$}
 \label{fig:ex1_convergence1}
\end{figure}
 
Figure \ref{fig:ex1_convergence1} (right) depicts the evolution of stopping criteria, which is more erratic with a decreasing tendency. In each iteration new sparse components appear then, when comparing consecutive multipliers, they may differ from 0 to 1 in those components, causing oscillations on their difference. We also realize in Table \ref{tab:performance1} that the number of sparse components
 of the approximated solution is increasing at every iterate.
%\begin{figure}[ht!]
%\begin{subfigure}{0.45\textwidth}
%%
%\includegraphics{}
%%	
%\end{subfigure}
%\begin{subfigure}{0.45\textwidth}
%%
%\includegraphics{}
%%	
%\end{subfigure}
%\caption{Residual of $x_k$ and sparse components at $\beta=0.004$}
% \label{fig:ex1_convergence2}
% \end{figure}
%%

\begin{table}[hbt]
\begin{center}
{ \normalfont
  \footnotesize
\begin{tabular}{lllllll}
$k$ & Cost & Residual & $\|\zeta_{k+1} - \zeta_k \|$ & Null &  OESOM & Execution \\
& & & &entries & iterations &time (s)  \\
\hline
\hline
1&229.2515&63.2346&0.044062&42&4&1.4507\\\hline
2&229.2233&0.28763&11.5196&893&11&6.1601\\\hline
3&229.2188&0.074112&21.7329&1170&11&6.303\\\hline
4&229.2172&0.037194&11.0554&1303&8&4.7229\\\hline
5&229.2164&0.026078&6.8869&1365&11&5.7958\\\hline
6&229.2157&0.019197&5.2423&1423&23&9.8522\\\hline
7&229.2154&0.014521&4.4751&1455&6&3.697\\\hline
8&229.2152&0.010528&3.1231&1471&6&3.3486\\\hline
9&229.2151&0.0074617&2.6985&1487&5&3.0378\\\hline
10&229.215&0.0046823&1.8893&1493&7&3.5644\\\hline
11&229.215&0.0057956&1.2435&1495&7&3.1689\\\hline
12&229.2149&0.0071222&0.80207&1501&6&3.573\\\hline
13&229.2148&0.0056617&1.6193&1507&6&2.9722\\\hline
14&229.2148&0.0054029&1.1993&1511&7&3.7972\\\hline
15&229.2147&0.0060645&1.1654&1519&6&3.1879\\\hline
16&229.2147&0.0041252&1.7088&1521&6&2.7979\\\hline
17&229.2147&0.0011533&1.0534&1525&6&2.7833\\\hline
18&229.2147&0.000409&0.45796&1525&5&2.0453\\\hline
%19&229.2147&0.00051555&8.3408e-07&1525&5&2.0342\\\hline
\end{tabular}
\caption{Performance data for DCA for Example 1}
\label{tab:performance1}
}
\end{center}
\end{table}
% '/Users/pmerino/Dropbox/investigacion/fractional norms/matlab/experimentos'
\subsection*{Varying the regularization parameter $\gamma$} According to our theory, it is expected that if $\gamma \arrow \infty$  the solution $\bar u_\gamma \arrow \bar u$. Here, we solve Example \ref{ex:1} for increasing values of $\gamma$. The numerical evidence of this convergence behavior is reflected in Table \ref{tab:vgamma} where we observe optimal cost converges to a fixed value, whereas sparsity also stabilizes at 1525 null components of the solution. 
\begin{table}[hbt]
\begin{center}
{ \normalfont
  \footnotesize
\begin{tabular}{cccc}
$\gamma$ & Optimal & Sparse & DCA \\
         & Cost    & components &  Iterations\\
\hline
\hline
100&229.219724&1080&17\\\hline
200&229.214857&1499&24\\\hline
500&229.214082&1582&18\\\hline
1000&229.214356&1553&22\\\hline
1500&229.214650&1525&15\\\hline
2000&229.214651&1525&17\\\hline
2500&229.214650&1525&20\\\hline
3000&229.214650&1525&20\\\hline
4000&229.214650&1525&22\\\hline
5000&229.214650&1525&24\\\hline
\end{tabular}
\caption{Numerical convergence for increasing values of $\gamma$.}
\label{tab:vgamma}
}
\end{center}
\end{table}

\subsection*{Varying the regularization parameter $\beta$} Now we experiment with different values of $\beta$, which determines the sparsity--inducting term $\fsparse$. Table \ref{tab:vbeta} shows that larger values of $\beta$ result in sparser solutions until the solution vanishes, which illustrates Lemma \ref{l:null_sol}. As expected, it can also be observed that the optimal cost increases according to the sparsity of the solution, reflected in smaller supports of the controls.
\begin{table}[htb]
\begin{center}
{ \normalfont
\footnotesize
\begin{tabular}{cccc}
$\beta$ & Optimal & Sparse & DCA \\
         & Cost    & components &  Iterations\\
\hline
\hline
0.0002&229.1145&1034&25\\\hline
0.0005&229.259&1729&30\\\hline
0.0010&229.4327&2528&37\\\hline
0.0015&229.5503&3004&30\\\hline
0.0020&229.6252&3359&31\\\hline
0.0025&229.6676&3631&40\\\hline
0.0030&229.6849&3843&37\\\hline
\end{tabular}
\caption{ Solutions become sparser as  $\beta$ increases.}
\label{tab:vbeta}
}
\end{center}
\end{table}
\begin{figure}[htb!]
                \centering
                \begin{tabular}{cc}
%                
%                %\includegraphics[scale=0.3]{fig1a.png}
%                %\includegraphics[scale=0.3]{fig1b.png}
                \includegraphics{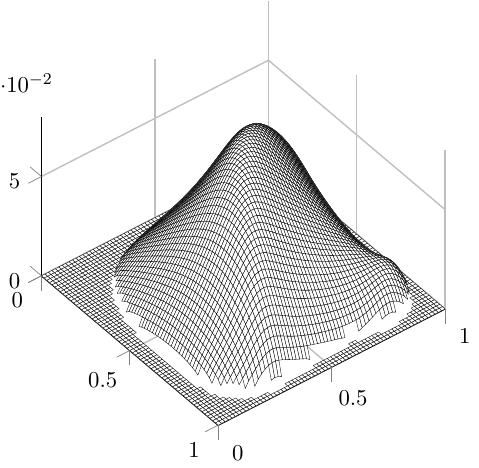} & 
                \includegraphics{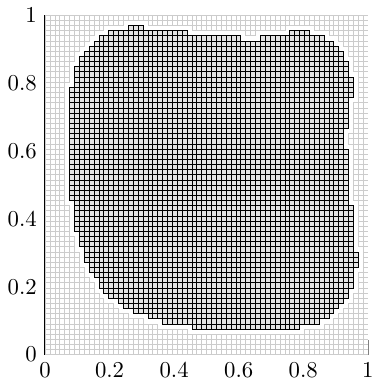}
                \end{tabular}
                \caption{Optimal control and its support for $\beta=0.0002$.}
                \label{fig:exp1_c01}
\end{figure}%
\begin{figure}[htb!]
                \centering
                \begin{tabular}{cc}
%                %\includegraphics[scale=0.3]{fig2a.png}
%                %\includegraphics[scale=0.3]{fig2b.png}
                 \includegraphics{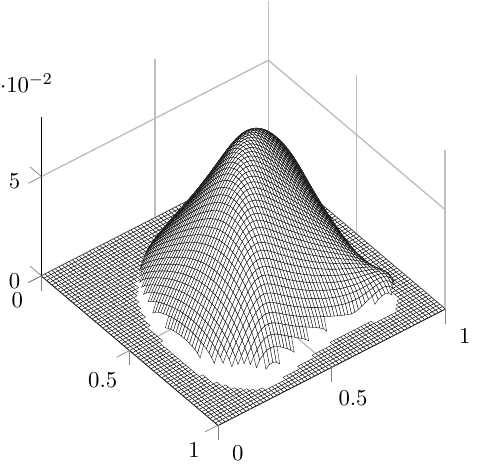} &
                 \includegraphics{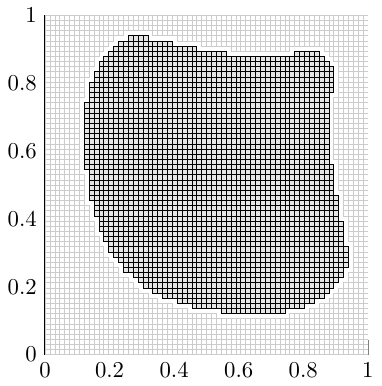}
                \end{tabular}
                \caption{Optimal control and its support for $\beta=0.001$.}
                \label{fig:exp1_c02}
        \end{figure}%
        
\begin{figure}[htb!]
                \centering
                \begin{tabular}{cc}
%                %\includegraphics[scale=0.3]{fig3a.png}
%                %\includegraphics[scale=0.3]{fig3b.png}
                \includegraphics{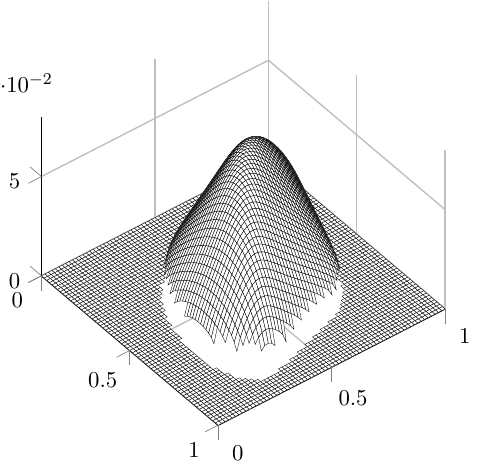} & 
                \includegraphics{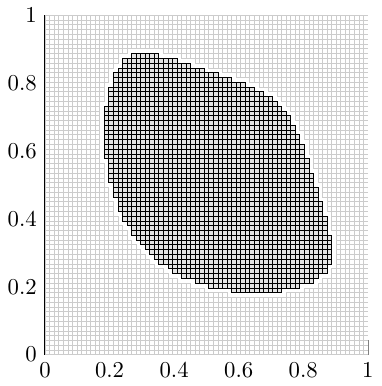}
                \end{tabular}
                \caption{Optimal control and its support for $\beta=0.002$.}
 				\label{fig:exp1_c03}
\end{figure}
\begin{figure}[htb!]	                
                \centering
                \begin{tabular}{cc}
%                %\includegraphics[scale=0.3]{fig3a.png}
%                %\includegraphics[scale=0.3]{fig3b.png}
                \includegraphics{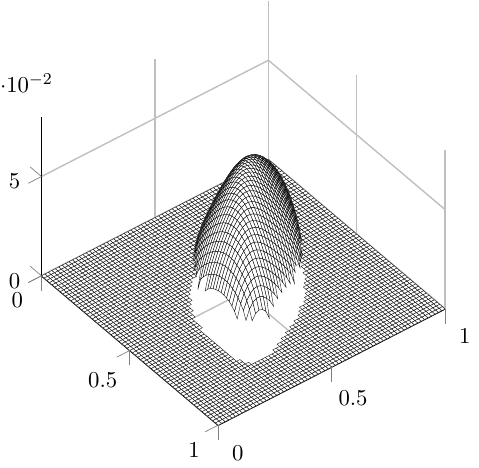} & 
                \includegraphics{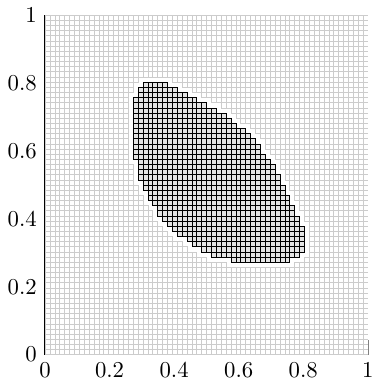}               
                \end{tabular}
               \caption{Optimal control and its support for $\beta=0.003$.}
                \label{fig:exp1_c04}      
\end{figure}
\subsection*{Varying the exponent $p$} We finish this example with the variation of the fractional exponent $1/p$ which also plays a role in the sparsity of the solution. In fact, $p$ determines how expensive is a sparse control. It is known that for larger values of $p$ the sparsity term tends to produce a volume constraint induced by the Donoho's counting norm cf.\cite{itoku2014}. However, the increment of $p$ does not necessarily increase sparsity in the solution as we can see in Table \ref{tab:vp}. 
\begin{table}[hbt]
\begin{center}
{ \normalfont
\footnotesize
\begin{tabular}{cccc}
$p$ & Optimal & Sparse & DCA \\
         & Cost    & components &  Iterations\\
\hline
\hline
1&229.2028&789&4\\\hline
1.2&229.3232&1860&18\\\hline
1.5&229.4736&2778&32\\\hline
2&229.6256&3355&27\\\hline
4&229.8814&3667&26\\\hline
8&230.3485&3441&23\\\hline
10&230.5699&3323&28\\\hline
20&231.4921&2846&23\\\hline
\end{tabular}
\caption{Influence of the power parameter $p$  in the sparsity  of the solution.}
\label{tab:vp}	
}
\end{center}
\end{table}
\clearpage
\end{example}
\begin{example}\label{e:comp1} In this example, we compare DC--Algorithm with the primal--dual method proposed in \cite{itoku2014}[See eq. (5.7), pg. 1273 for problem $(P_{s,\varepsilon})$] developed to solve optimal control problems involving $L^{q}$--penalizations with $q \in(0,1)$. Here, we consider an additional $L^2$ penalization on the gradient of the control. Therefore, the control space is restricted to a subset of $H_0^1(\om)$. Although, this penalization is beyond our theory, it can be considered with straightforward modifications. The problem reads
\begin{equation}
\tag{E2} \label{eq:ex1}
\begin{cases}
\displaystyle\min_{(y,u)} ~\frac{1}{2}\| y-y_d \|^2_{L^2(\om)}+\frac{1}{2}\|\nabla u\|^2_{L^2(\om)}+\beta \fsparse_2(u)\\
\hbox{ subject to }\\
\hspace{40pt}\begin{array}{rll}
- \Delta y=&u, &\hbox{in  } \om, \\
y=&0, &\hbox{on  }  \Gamma,
%\dfrac{\partial y}{\partial \vec{n}}=0&\hbox{sobre  } \Gamma. %, $\gamma= 1e4$,
\end{array}
\end{cases}
\end{equation}	
%where $\partial_n$ denotes the derivative with respect to the exterior unit normal vector $n$. 
In the framework of \cite{itoku2014}, we choose the quantities $B=I$, $E=-\Delta$, $K=E^{-1}$, $g=0$, $f=y_d$ and $Y=L^2(\om)$. Therefore, the numerical scheme  (5.7) in \cite{itoku2014} consist in the sequence of equations of the form:
    \begin{equation}\label{eq:itoku_scheme}
    	- \Delta u_{k+1} +  K^*K u_{k+1} + \frac{\beta / p}{\max{(\varepsilon^{2-\frac{1}{p}}}, |u_{k}|^{2-\frac{1}{p}})  } u_{k+1}= K^*y_d, \quad k=0,1,2,\ldots
    \end{equation}
The operator $K$  and $K^*$ involve the inverse of the differential operator (corresponding to the laplacian, in this example). However, it is an uncommon situation having an explicit representation of $K$ and $K^*$, rather we have to solve the associated PDE. Therefore, we introduce the state  $y_{k+1}$ and the adjoint state $\phi_{k+1}$. Then, equation \eqref{eq:itoku_scheme}  is reformulated as the following iterative system:
\begin{equation}\label{eq:itokunisch}
%\tag{PD1}
\l(
\begin{array}{ccc}
\alpha E + \frac{\beta / p}{\max{(\varepsilon^{2-{1}/{p}}}, |u_{k}|^{2-{1}/{p}})  } & I & 0 \\
0 & E & -I \\
-I & 0 & E	
\end{array}
\r) 
\l(
\begin{array}{c}
	u_{k+1} \\
	\phi_{k+1} \\
	y_{k+1}

\end{array}
\r) 
=
\l(
\begin{array}{c}
0 \\
y_d	\\
0
\end{array}
\r). 
\end{equation}
In order to compare DC-Algorithm (DCA) with the \emph{Primal Dual} based Algorithm \eqref{eq:itokunisch} (which we will refer as PD-Algorithm, PDA for short) we observe their performance at different values of the regularization parameters with both methods starting from the same initial point $u_0$. 
There is not a direct relation between the regularization parameters $\varepsilon$ of PD-Algorithm and $\gamma$ used in DC-Algorithm. Therefore,  we chose regularization parameters for each regularizer such that the function $|t|^{1/p}$ with approximately the same error, i.e.  the regularization error satisfies: $R_e=\norm{ |t|^{1/p} - t_r }_\infty \approx \text{\emph{tol}}$, where \emph{tol} is a tolerance and $t_r$ denotes the regularization. 

Moreover, since both algorithms have different stopping rules  we observe the cost value after 100 iterations, to guarantee that booth algorithms are close enough to the solution. The results are summarized in Table \ref{tab:comp1}. In our experiments we found a similar performance of both algorithms. After 100 iterations we observe that PDA or DCA can reach the minimum cost, depending on the regularization parameters.

\begin{table}[ht]%[H]
\centering
\scriptsize
\begin{tabular}{cllccc}

%\multirow{2}{*}{} & \multicolumn{4}{c}{$\beta=0.005$} %& %\multicolumn{2}{c}{Time (s)} 
 %\cline{2-5}
                         & Reg.              & Re             & Cost ($\beta=0.005$) & Cost($\beta=0.01$)   &   Cost ($\beta=0.2$)  
\\ \hline \hline
\multirow{2}{*}{PD-Algorithm}   & $\varepsilon = 0.0001$		& 0.00750		  & 6.10089 & 7.2873 & 8.30983\\
								&  $\varepsilon = 0.001$ 		& 0.01290 		  & \textbf{6.10048}& 7.2626 & 8.31011\\
\hline								
\multirow{2}{*}{DC-Algorithm}  &  $\gamma =500$ 				& 0.00746         & 6.10053 & \textbf{7.1810}&\textbf{8.19439}		\\
 				&$\gamma =300$ 				& 0.01298           & 6.10050         &  \textbf{7.1315} & 8.19786\\
   \hline
\end{tabular}
\caption{Comparison with primal--dual algorithm after 100 iterations}
\label{tab:comp1}
\end{table}

Figure \ref{fig:ex2_sparsity} shows sparse components of the solution computed by PDA and DCA methods respectively. It can be observed that PD--Algorithm computes sparse components approximately 0 ($\approx 10^{-4}$) while DC--Algorithm is able to recover zero sparse components as expected from the theory.

\begin{figure}[hbt!]	                
%        \begin{subfigure}{0.8\textwidth}
                \centering
                \begin{tabular}{cc}
%                %\includegraphics[scale=0.3]{fig3a.png}
%                %\includegraphics[scale=0.3]{fig3b.png}
                \includegraphics{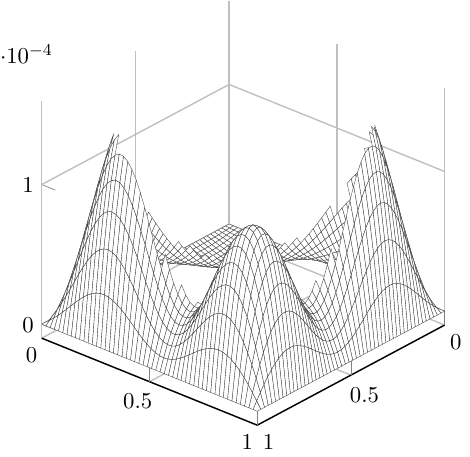} & 
                \includegraphics{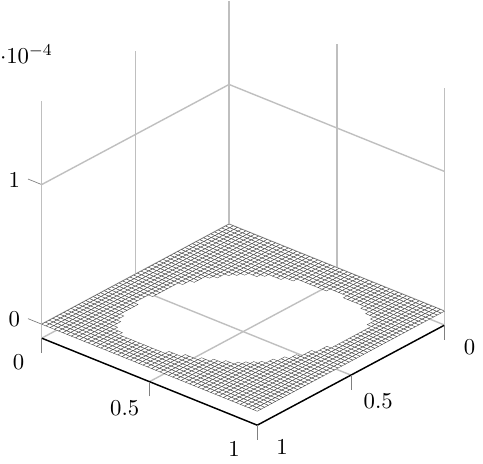}               
                \end{tabular}
                \caption{Sparse part of the control computed by PD--algorithm (left) and  DC--Algorithm (right)} 
                \label{fig:ex2_sparsity}
%        \end{subfigure}%        
\end{figure}

\end{example}

\begin{example} This example consists in imposing box--constraints on Example \ref{ex:1}. We keep the same parameters as in Example \ref{ex:1}. Therefore, we require in addition that 
\begin{equation*}
	u \in U_{ad}=\{u \in L^2(\om) : 0 \leq u \leq 0.035 \}.
\end{equation*} 
Similar  results are observed in this case as depicted in Figure \ref{fig:ex2_convergence}. The structure of the sparsity and the support of the optimal control is similar but in this case the optimal control is also active on the prescribed bounds as  observed  in Figure \ref{fig:exp2_c01}.
 \begin{figure}[htb!]
\centering
\begin{subfigure}{0.45\textwidth}
\includegraphics{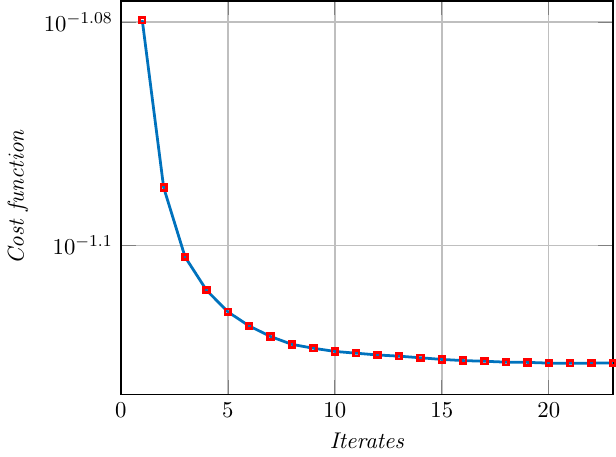}
\end{subfigure}
\begin{subfigure}{0.45\textwidth}
\includegraphics{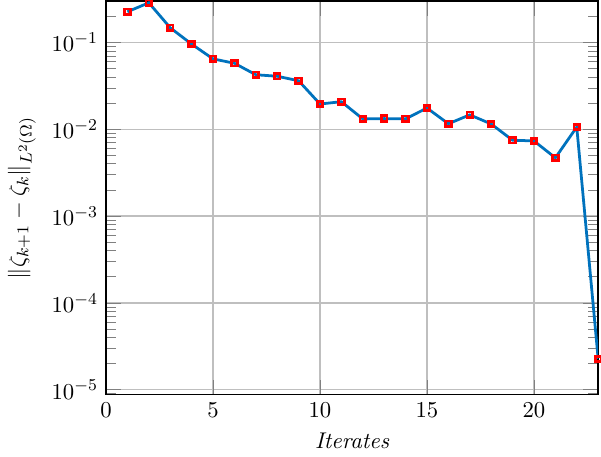}
\end{subfigure}
%
%\begin{subfigure}{0.45\textwidth}
%
%\input{fig0_res.tex}
%	
%\end{subfigure}
%\begin{subfigure}{0.45\textwidth}
%
%\input{fig0_sparse.tex}
%	
%\end{subfigure}
\caption{Cost function and distance of consecutive solutions for the multiplier $\zeta$.}
 \label{fig:ex2_convergence}
 \end{figure}
\begin{figure}[h!]	                
%        \begin{subfigure}{0.8\textwidth}
                \centering
                \begin{tabular}{cc}
%                %\includegraphics[scale=0.3]{fig3a.png}
%                %\includegraphics[scale=0.3]{fig3b.png}
                \includegraphics{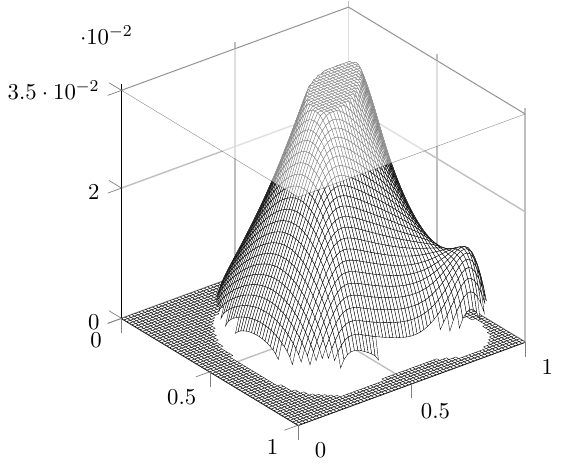} & 
                \includegraphics{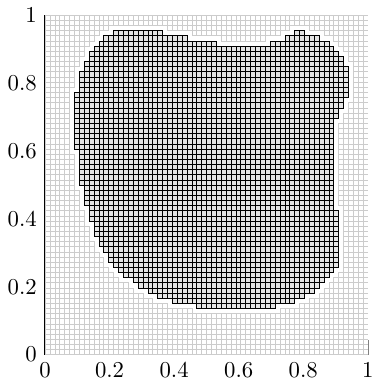}               
                \end{tabular}
                \caption{Box--constrained optimal control and its support.}
                \label{fig:exp2_c01}
%        \end{subfigure}%        
\end{figure}

Our final experiment is out of scope of this paper since our theory does not consider the case $\alpha=0$. However, the method is still useful to this case and further analysis is required. Our problem consists in a box--constrained optimal control problem with $L^q$--term only ($\alpha=0$). Here the desired state is $y_d(x_1,x_2) = \sin(2\pi x_1)\, \sin(2 \pi x_2)$ and the set of admissible controls is given by 
\begin{equation*}
	u \in U_{ad}=\{u \in L^2(\om) : -0.035 \leq u \leq 0.035 \}.
\end{equation*}

  In this case we observe a typical shape of a bang--bang optimal control (see Figure \ref{fig:exp2_c02}). 

\begin{figure}[ht!]	                
%        \begin{subfigure}{0.8\textwidth}
                \centering
                \begin{tabular}{cc}
                \includegraphics{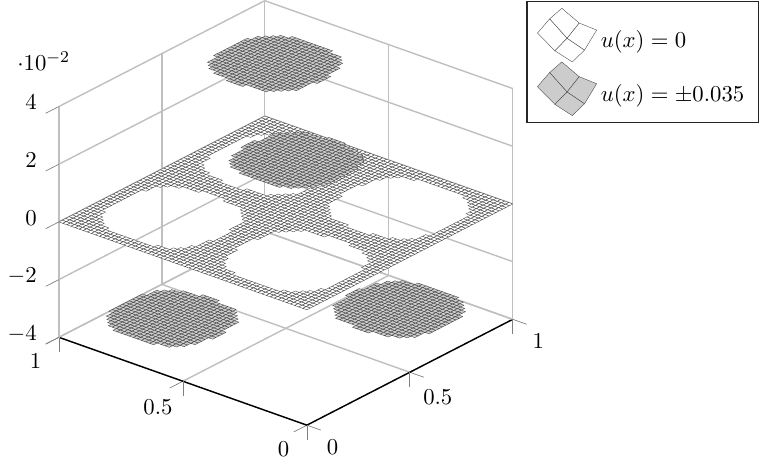} & 
                \includegraphics{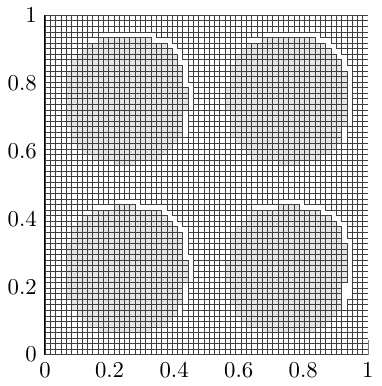}               
                \end{tabular}
                \caption{Box--constrained optimal control and its support for $\alpha=0$.}
                \label{fig:exp2_c02}
%        \end{subfigure}%        
\end{figure}

\end{example}
\section{Conclusions}
We were able to apply the DC methodology to optimal control problems involving the \(L^q\) ($0<q<1$) nonconvex terms by introducing a Huber like smoothing for \(L^q\) quasinorms. The proposed smoothing captures the nonconvex nature and nondifferentiability of the \(L^q\) terms which are reflected in the computed approximated solutions.

Using the proposed Huber regularization we have identified a suitable representation of the cost as a difference--of--convex functions. This is crucial for an efficient application of the DC algorithm to our problem since one of the convex parts ($H$) is G\^ateaux differentiable and therefore its subgradient is computed directly.  For the other convex function ($G$), the computation of its subgradient needs to iteratively solve a sparse optimal control problem with $L^1$ penalization, for which there are efficient methods at hand.

Furthermore, the DC approach is helpful for deriving first--order necessary optimality conditions. The obtained optimality system is an important result since it provides deeper insight in understanding the nature of the solutions for this class of optimal control problems.

The proposed DC algorithm solves the nonconvex optimal control problem efficiently as shown in the numerical examples section. Although it is known that the DC algorithm is of first order, the question of the rate of convergence in this setting remains to be answered.

 Our algorithm can compute approximate solutions which reveal the sparse structure of the optimal controls. In addition, the optimality system  derived by the DC approach is suitable for semismooth Newton methods (SSN). However, the application of second order methods, such as SSN, requires further research.
\clearpage

%\bibliographystyle{plain}
%\bibliography{biblio}
%\nocite{*}

\end{document}